\def\BState{\State\hskip-\ALG@thistlm}
\newtheorem{thm}{Theorem}[section]
\theoremstyle{definition}
\renewcommand{\hat}{\widehat}
\renewcommand{\tilde}{\widetilde}
\definecolor{newcolor}{rgb}{.8,.349,.1}
\newcommand{\logLogSlopeTriangle}[5]
{
	\pgfplotsextra
	{
		\pgfkeysgetvalue{/pgfplots/xmin}{\xmin}
		\pgfkeysgetvalue{/pgfplots/xmax}{\xmax}
		\pgfkeysgetvalue{/pgfplots/ymin}{\ymin}
		\pgfkeysgetvalue{/pgfplots/ymax}{\ymax}
		
		\pgfmathsetmacro{\xArel}{#1}
		\pgfmathsetmacro{\yArel}{#3}
		\pgfmathsetmacro{\xBrel}{#1-#2}
		\pgfmathsetmacro{\yBrel}{\yArel}
		\pgfmathsetmacro{\xCrel}{\xArel}
		
		\pgfmathsetmacro{\lnxB}{\xmin*(1-(#1-#2))+\xmax*(#1-#2)} 
		\pgfmathsetmacro{\lnxA}{\xmin*(1-#1)+\xmax*#1} 
		\pgfmathsetmacro{\lnyA}{\ymin*(1-#3)+\ymax*#3} 
		\pgfmathsetmacro{\lnyC}{\lnyA+#4*(\lnxA-\lnxB)}
		\pgfmathsetmacro{\yCrel}{\lnyC-\ymin)/(\ymax-\ymin)} 
		
		\coordinate (A) at (rel axis cs:\xArel,\yArel);
		\coordinate (B) at (rel axis cs:\xBrel,\yBrel);
		\coordinate (C) at (rel axis cs:\xCrel,\yCrel);
		
		\draw[#5]   (A)-- node[pos=0.5,anchor=north] {}
		(B)-- 
		(C)-- node[pos=0.5,anchor=west] {\small{#4}}
		cycle;
	}
}
\journal{Journal of Computational Physics}
\newcommand{\note}[1]{\textcolor{black}{#1}}
\newcommand{\LRp}[1]{\left(#1\right)}
\begin{document}


\begin{frontmatter}

\title{A weight-adjusted discontinuous Galerkin method for wave propagation in coupled elastic-acoustic media }%

\author[1]{Kaihang {Guo}\corref{cor1}}
\author[2]{Sebastian Acosta}
\author[1]{Jesse {Chan}}
\cortext[cor1]{Corresponding author: 
	Email: kaihang.guo@rice.edu;
	Tel.: +1-281-702-8829;  
	}

\address[1]{Department of Computational and Applied Mathematics, Rice University, 6100 Main St, Houston, TX 77005, United States}
\address[2]{Department of Pediatrics-Cardiology, Baylor College of Medicine, Houston, TX, United States}


\begin{abstract}
This paper presents a high-order discontinuous Galerkin (DG) scheme for the simulation of wave propagation through coupled elastic-acoustic media. We use a first-order stress-velocity formulation, and derive a simple upwind-like numerical flux which weakly imposes continuity of the normal velocity and traction at elastic-acoustic interfaces. When combined with easily invertible weight-adjusted mass matrices \cite{chan2017weight, chan2017curved, chan2018weight}, the resulting method is efficient, consistent, and energy stable on curvilinear meshes and for arbitrary heterogeneous media, including anisotropy and sub-cell (micro) heterogeneities.  We numerically verify the high order accuracy and stability of the proposed method, and investigate its performance for applications in photoacoustic tomography. 
\end{abstract}


\end{frontmatter}

 
\section{Introduction}
Simulations of wave propagation through elastic-acoustic coupling media are applicable to a \note{wide} range of scientific and engineering areas.  For example, coupled elastic-acoustic media arises when simulating wave propagation through the human bone and tissue.  While wave propagation in tissue is modeled by the acoustic wave equation, wave propagation in bone is more accurately modeled using the elastic wave equation, and when considering wave propagation through both bone and tissue, careful attention is required for treatment of the elastic-acoustic interface. Wave propagation through coupled elastic-acoustic media also arises in seismology, where oceans are modeled as acoustic materials and the earth is modeled as an elastic medium.  

Several high order finite element methods have been developed for coupled acoustic-elastic wave propagation based on both first and second order formulations of the underlying equations.  In \cite{komatitsch2000wave}, Komatitsch et al.\ use a spectral element method (SEM) for the second order form of the equations, and enforce the coupling between acoustic and elastic media using with a predictor-multicorrector iteration at each time step.  A more efficient time-stepping approach based on explicit coupling conditions is proposed in \cite{chaljub2004spectral, komatitsch2005spectral}.  Discontinuous Galerkin (DG) methods have also been developed for coupled acoustic-elastic media, with elastic-acoustic interface conditions typically incorporated through modifications of the numerical flux.  For second order equations, Antonietti et al\note{.}\  \cite{antonietti2018high} analyze the stability and convergence of a symmetric interior penalty DG formulation on polygonal and polyhedral meshes.  Appelo and Wang \cite{appelo2018energy} introduce an ``energy-based'' second order DG method which can be made to either conserve or dissipate energy based on the choice of numerical flux.  

DG methods, which were originally developed for first order hyperbolic equations, are also widely used for first-order formulations.  \note{Wilcox et al. \cite{wilcox2010high} derive an upwind numerical flux from the exact Riemann problem at acoustic-acoustic, elastic-elastic, and elastic-acoustic interfaces, and use this to construct a first-order velocity-strain DG-SEM scheme for coupled isotropic elastic-acoustic media on meshes of curved hexahedral elements. The authors show stability of the continuous DG formulation; however, semi-discrete stability in the presence of inexact quadrature, curved meshes, and sub-cell heterogeneities is not discussed in detail.} 
\note{In \cite{zhan2018exact}, Zhan et al. extend this DG-SEM method to anisotropic elastic-acoustic media by solving a simplified Riemann problem on inter-element interfaces, though high order accuracy and energy stability are not addressed theoretically.} Ye et al\note{.} \cite{ye2016discontinuous} circumvent the Riemann problem altogether by using a DG formulation with a dissipative upwind-like ``penalty'' flux.  The resulting DG method is high order accurate and provably energy stable for anisotropic elastic-acoustic media with piecewise constant heterogeneities.  

In this paper, we \note{develop} a high order DG method for acoustic-elastic media based on the first order stress-velocity form of the equations.  The proposed method utilizes a simple dissipative upwind-like penalty flux and weight-adjusted mass matrices (a generalization of mass lumping) \cite{chan2017weight,chan2017curved,chan2018weight}.  The method applicable to unstructured and curved tetrahedral meshes, and is high order accurate and energy stable in the presence of arbitrary heterogeneous media including anisotropy and micro (sub-cell) \note{heterogeneities}. \note{Instead of an exact upwind flux, we add upwind-like dissipation through a penalty flux based on natural continuity conditions between acoustic-acoustic, elastic-elastic, and coupled acoustic-elastic interfaces. Like the upwind flux, the penalty flux adds dissipation and achieves theoretically optimal high order convergence rates for all numerical experiments without impacting the maximum time-step size. However, expressions for the penalty flux are significantly simpler than the fluxes developed by Wilcox et al.\ and Zhan et al.\ \cite{wilcox2010high,zhan2018exact}. Additionally, we prove that the penalty flux is consistent and that the semi-discrete DG formulation is energy stable for general ``modal'' DG formulations in the presence of both sub-cell heterogeneities and curved elements. Experiments with high order DG discretizations on curvilinear simplicial meshes verify these theoretical properties.} 



The outline of the paper is as follows: In Section~\ref{sec:pre}, we review DG formulations for the acoustic and elastic wave equations.  In Section~\ref{sec:acouselasdg}, we introduce the numerical flux for elastic-acoustic interfaces and prove that the resulting DG formulation is energy stable and consistent.  In Section~\ref{sec:numerical}, we verify the stability and accuracy of the proposed DG method, and conclude in Section~\ref{sec:app} with an application in photoacoustic tomography (PAT).  

\section{Weight-adjusted DG methods for acoustic and elastic wave propagation}\label{sec:pre}

In this section, we briefly review high order DG discretizations for the acoustic and elastic wave equations.  In the presence of micro (sub-cell) heterogeneities, inverse weighted mass matrices appear in the matrix forms of these discretizations.  These inverses are approximated using easily invertible \textit{weight-adjusted} mass matrices, resulting in a weight-adjusted DG method.  The weight-adjusted approach will be extended to the DG formulation for coupled elastic-acoustic wave propagation and curvilinear meshes in \note{Sections~\ref{sec:eawave}} and \ref{sec:curvedDG}.

\subsection{Mathematical notation}\label{sec:notation}
We assume a physical domain $\Omega$, which is exactly represented by a triangulation $\Omega_h$ consisting of $K$ non-overlapping elements $D^k$.  We assume that each element $D^k$ is the image of the reference element $\widehat{D}$ under a mapping $\bm{\Phi}^k$ 
$$\bm{x}=\bm{\Phi}^k\widehat{\bm{x}},\qquad \bm{x}\in D^k,\quad \widehat{\bm{x}}\in \widehat{D},$$
where $\bm{x}=\left(x,y,z\right)$ are physical coordinates on the $k$th element and $\widehat{\bm{x}}=\left(\hat{x},\hat{y},\hat{z}\right)$ are coordinates on the reference element. Over each element $D^k$, we define the polynomial approximation space $V_h\left(D^k\right)$ as
$$\note{V_h\left(D^k\right)= V_h\left(\widehat{D}\right)\circ \left(\bm{\Phi}^k\right)^{-1}=\{ \hat{v}_h \circ (\Phi^k)^{-1}, \  \hat{v}_h \in V_h(\hat{D}),\},}$$
where $V_h\left(\widehat{D}\right)$ is a polynomial approximation space of degree $N$ on the reference element.  In this work,\footnote{In three dimensions, the reference element and approximation space are the bi-unit right tetrahedron and total degree $N$ polynomials
\[
\widehat{D}=\{\left( \hat{x},\hat{y},\hat{z}\right)\geq -1,\quad \hat{x}+\hat{y}+\hat{z}\leq -1\}, \qquad V_h\left(\widehat{D}\right)=P^N\left(\widehat{D}\right)=\big\{\hat{x}^i \hat{y}^j \hat{z}^k,\quad 0\leq i+j+k\leq N\big\}.
\]
}
the reference element is taken to be bi-unit right triangle,
$$\widehat{D}=\{\left( \hat{x},\hat{y}\right)\geq -1,\quad \hat{x}+\hat{y}\leq 0\},$$
and the reference approximation space $V_h\left(\widehat{D}\right)$ is taken to be total degree $N$ polynomials,
$$V_h\left(\widehat{D}\right)=P^N\left(\widehat{D}\right)=\big\{\hat{x}^i \hat{y}^j,\quad 0\leq i+j\leq N\big\}.$$

\subsection{Discontinuous Galerkin methods for first-order wave equations}\label{sec:DGmethod}
On an element $D^k$, we define the jump of scalar and vector valued functions across element interfaces as
$$[\![p]\!]=p^+-p, \qquad [\![\bm{u}]\!]=\bm{u}^+-\bm{u},$$
where $p^+, \bm{u}^+$ and $p, \bm{u}$ are the neighboring and local traces of the solution over the interface, respectively. \note{Note that, for a shared interface between two elements $D^k$ and $D^{k,+}$, the sign of the jump is different depending on whether the jump is defined with respect to $D^k$ or $D^{k,+}$.}  The average across an interface is defined as
$$\{\!\{p\}\!\}=\frac{1}{2}\left(p^++p\right),\qquad \{\!\{\bm{u}\}\!\}=\frac{1}{2}\left(\bm{u}^++\bm{u}\right).$$ 

In this work, we use a first-order pressure-velocity formulation for the acoustic wave equation (e.g.\ in fluid media)
\begin{equation}
\begin{split}
\frac{1}{\rho c^2}\frac{\partial p}{\partial t}=\nabla\cdot\bm{u},\\
\rho \frac{\partial\bm{u}}{\partial t}=\nabla p,
\end{split}
\label{eq:awave}
\end{equation}
where $p$ is the acoustic pressure, $\bm{u}\in \mathbb{R}^d$ is the vector of velocities in each coordinate direction, and $\rho$ and $c$ are density and wavespeed, respectively. For simplicity, we assume unit density $\rho=1$. We also assume that (\ref{eq:awave}) is posed over time $t\in[0,T)$ on the physical domain $\Omega$ with boundary $\partial\Omega$, with the wavespeed bounded from above and below by
$$0<c_{\textmd{min}}\leq c(\bm{x})\leq c_{\textmd{max}}<\infty.$$

We adopt a DG variational formulation from \cite{warburton2013low}, which is given over element $D^k$ by
\begin{equation}
\begin{split}
\left(\frac{1}{c^2}\frac{\partial p}{\partial t},q\right)_{L^2(D^k)} &= \left(\nabla\cdot\bm{u},q\right)_{L^2(D^k)} + \sum_{f\in \partial D^k}\left\langle \frac{1}{2}\bm{n}^T[\![\bm{u}]\!]+\frac{\tau_p}{2}[\![p]\!],q\right\rangle_{L^2(f)},\\
\left(\frac{\partial \bm{u}}{\partial t},\bm{w}\right)_{L^2(D^k)} &= \left(\nabla p,\bm{w}\right)_{L^2(D^k)} + \sum_{f\in \partial D^k}\left\langle \frac{1}{2}[\![p]\!]\bm{n}+\frac{\tau_u}{2}[\![\bm{u}]\!],\bm{w}\right\rangle_{L^2(f)},
\end{split}
\label{eq:acousdg}
\end{equation}
where $\bm{n}$ is the \note{outward} normal vector, and $\tau_p,\tau_u$ are penalty parameters.  Here, $\left(u,v\right)_{L^2\left(D^k\right)}$ and ${\left\langle u,v\right\rangle}_{L^2(f)}$ denote the $L^2$ inner products over $D^k$ and a face $f$ of the surface $\partial D^k$, respectively.  

For the elastic wave equation, we use a symmetrized first-order stress-velocity formulation from \cite{chan2018weight}. Let $\rho$ be the density and $\bm{C}$ be the symmetric matrix form of constitutive tensor relating stress and strain. The first-order system in $d$ dimensions is given by
\begin{equation}
\begin{split}
\rho\frac{\partial\bm{v}}{\partial t}&=\sum_{i=1}^{d}\bm{A}_i^T\frac{\partial\bm{\sigma}}{\partial\bm{x}_i},\\
\bm{C}^{-1}\frac{\partial\bm{\sigma}}{\partial t}&=\sum_{i=1}^{d} \bm{A}_i\frac{\partial\bm{v}}{\partial\bm{x}_i},
\end{split}
\label{eq:ewave}
\end{equation}
where $\bm{v}$ is the vector of velocity and $\bm{\sigma}$ is a vector consisting of unique entries of the symmetric stress tensor. In two dimensions, \note{the  matrices} $\bm{A}_i$ are defined as 
$$\bm{A}_1=
\renewcommand\arraystretch{1}
\begin{pmatrix}
1&0&0\\
0&0&0\\
0&0&1
\end{pmatrix},\qquad
\bm{A}_2=
\begin{pmatrix}
0&0&0\\
0&1&0\\
1&0&0
\end{pmatrix},
$$
and the expression of $\bm{A}_i$ in three dimensions can be found in \cite{chan2018weight}.  \note{Note that, by factoring out $\bm{C}$, the resulting matrices $A_i$ do not involve any material coefficients and all entries are either $0$ or $1$.}  The elastic wave equation is discretized using the following DG formulation:
\begin{equation}
\begin{split}
\left(\rho\frac{\partial\bm{v}}{\partial t},\bm{w}\right)_{L^2(D^k)}&=\left(\sum^d_{i=1}\bm{A}_i^T\frac{\partial\bm{\sigma}}{\partial \bm{x}_i},\bm{w}\right)_{L^2(D^k)}+\sum_{f\in \partial D^k}\left\langle \frac{1}{2}\bm{A}_n^T[\![\bm{\sigma}]\!]+\frac{\tau_v}{2}\bm{A}_n^T\bm{A}_n[\![\bm{v}]\!],\bm{w}\right\rangle_{L^2(f)},\\
\left(\bm{C}^{-1}\frac{\partial\bm{\sigma}}{\partial t},\bm{q}\right)_{L^2(D^k)}&=\left(\sum^d_{i=1}\bm{A}_i\frac{\partial\bm{v}}{\partial \bm{x}_i},\bm{q}\right)_{L^2(D^k)}+\sum_{f\in \partial D^k}\left\langle \frac{1}{2}\bm{A}_n[\![\bm{v}]\!]+\frac{\tau_{\sigma}}{2}\bm{A}_n\bm{A}_n^T[\![\bm{\sigma}]\!],\bm{q}\right\rangle_{L^2(f)},
\end{split}
\label{eq:elasdg}
\end{equation}
where $\bm{A}_n$ is normal matrix defined as $\bm{A}_n=\sum^d_{i=1}\bm{n}_i\bm{A}_i$, and terms $\tau_v,\tau_{\sigma}$ are penalty parameters introduced on element interfaces.  The DG formulations (\ref{eq:acousdg}) and (\ref{eq:elasdg}) are provably consistent and energy stable for non-negative penalty parameters $\tau_p, \tau_u, \tau_v,\tau_{\sigma}\geq 0$ \cite{chan2017weight,chan2018weight}.

\subsection{The semi-discrete matrix system}\label{sec:matrixsystem}
The matrix form of the DG formulations in the previous section involve mass and differentiation matrices. We assume the reference and physical approximation spaces $V_h\left(\hat{D}\right)$ and $V_h\left(D^k\right)$ are spanned by bases $\{\phi_i\}_{i=1}^{N_p}$ and $\{\phi_i^k\}_{i=1}^{N_p}$, respectively.  The mass matrix $\bm{M}^k$, weighted mass matrix $\bm{M}^k_w$ and face mass matrix $\bm{M}^k_f$ for $D^k$ are defined as
\begin{equation*}
\begin{split}
\left(\bm{M}^k\right)_{ij} &=\int_{D^k}\phi^k_j\phi^k_i= \int_{\hat{D}}\phi_j\phi_i J^k,\\ 
\left(\bm{M}^k_{w}\right)_{ij} &= \note{\int_{D^k} w\phi^k_j\phi^k_i}= \note{\int_{\hat{D}} w\phi_j\phi_i J^k},\\
\left(\bm{M}_f^k\right)_{ij} &=\int_{\partial D^k_f}\phi^k_j\phi^k_i = \int_{\partial\hat{D}_f}\phi_j\phi_i J_f^k,
\end{split}
\end{equation*}
where $J^k$ and $J^k_f$ are the volume and face Jacobian of the affine mapping $\bm{\Phi}^k$, and $w(\bm{x})$ is a spatially varying positive and bounded weight. We also define weak differentiation matrices $\bm{S}_i$ with entries
$$\left(\bm{S}_1\right)_{ij}=\int_{\hat{D}}\frac{\partial\phi_j}{\partial x}\phi_iJ^k,\qquad \left(\bm{S}_2\right)_{ij}=\int_{\hat{D}}\frac{\partial\phi_j}{\partial y}\phi_iJ^k,\qquad \left(\bm{S}_3\right)_{ij}=\int_{\hat{D}}\frac{\partial\phi_j}{\partial z}\phi_iJ^k.$$
Using the above notation, the DG formulation (\ref{eq:acousdg}) can be written in matrix form as
\begin{equation*}
\begin{split}
\bm{M}^k_{1/c^2}\frac{d\bm{p}}{dt}&=\sum_{j=1}^d\bm{S}_j^k\bm{U}_j+\sum_{f=1}^{N_{\textmd{faces}}}\bm{M}^k_fF_p\left(\bm{p},\bm{p}^+,\bm{U},\bm{U}^+\right),\\
\bm{M}^k\frac{d\bm{U}_i}{dt} &=\bm{S}^k_i\bm{p}+\sum_{f=1}^{N_{\textmd{faces}}}\bm{n}_i\bm{M}^k_fF_u\left(\bm{p},\bm{p}^+,\bm{U},\bm{U}^+\right),\qquad i=1,\dots,d,
\end{split}
\end{equation*}
where $\bm{U}_i$ and $\bm{p}$ are degrees of freedom for $\bm{u}_i$ and $p$. The flux terms $F_p,F_u$ are defined such that
\begin{equation*}
\begin{split}
\left(\bm{M}^k_fF_p\left(\bm{p},\bm{p}^+,\bm{U},\bm{U}^+\right)\right)_j&=\int_{\partial D^k_f}\frac{1}{2}\left(\tau_p[\![p]\!]+\bm{n}\cdot[\![\bm{u}]\!]\right)\phi^k_j,\\
\left(\bm{n}_i\bm{M}^k_fF_u\left(\bm{p},\bm{p}^+,\bm{U},\bm{U}^+\right)\right)_j&=\int_{\partial D^k_f}\frac{1}{2}\left(\tau_u[\![\bm{u}]\!]\cdot\bm{n}+[\![p]\!]\right)\phi^k_j\bm{n}_i.
\end{split}
\end{equation*}

The DG scheme (\ref{eq:elasdg}) for the elastic wave equations can similarly be written as
\begin{equation*}
\begin{split}
\bm{M}^k_{\rho\bm{I}}\frac{d\bm{V}}{dt}&=\sum_{i=1}^d\left(\bm{A}_i^T\otimes\bm{S}_i^k\right)\bm{\Sigma}+\sum_{f=1}^{N_{\textmd{faces}}}\left(\bm{I}\otimes\bm{M}^k_f\right)F_v,\\
\bm{M}^k_{\bm{C}^{-1}}\frac{d\bm{\Sigma}}{dt}&=\sum_{i=1}^d\left(\bm{A}_i\otimes\bm{S}_i^k\right)\bm{V}+\sum_{f=1}^{N_{\textmd{faces}}}\left(\bm{I}\otimes\bm{M}^k_f\right)F_{\sigma},
\end{split}
\end{equation*}
where $F_v, F_{\sigma}$ denote the elastic flux terms, $\otimes$ denotes the Kronecker product, and the matrix-valued weight mass matrix $\bm{M}^k_{\bm{C}^{-1}}$ is defined as
\[
\bm{M}^k_{\bm{C}^{-1}} = \begin{bmatrix}
\bm{M}^k_{\bm{C}^{-1}_{11}} & \ldots &\bm{M}^k_{\bm{C}^{-1}_{1d}}\\
\vdots & \ddots & \vdots\\
\bm{M}^k_{\bm{C}^{-1}_{d1}} & \ldots &\bm{M}^k_{\bm{C}^{-1}_{dd}}\\
\end{bmatrix}, 
\]
where $\bm{C}^{-1}_{ij}$ denotes the $ij$th entry of $\bm{C}^{-1}$ and $\bm{M}^k_{\bm{C}^{-1}_{ij}}$ denotes the scalar weighted mass matrix with weight $\bm{C}^{-1}_{ij}$.

\subsection{Weight-adjusted discontinuous Galerkin method}\label{sec:WADG}
In this work, we pair high order DG methods with explicit time-stepping schemes, which require the inversion of DG mass matrices at each time-step.  Let $\bm{U}$ denote the vector of all DG degrees of freedom, and let $\bm{M}_{1/c^2}^k, \bm{A}_k$ denote the local matrices representing the local DG mass mass matrix and spatial DG formulation, such that the semi-discrete DG scheme can be written over $D^k$ as follows:
\begin{equation}
\frac{d\bm{U}}{dt}=\left(\bm{M}_{1/c^2}^k\right)^{-1}\bm{A}_k\bm{U}.
\label{eq:wadginvert}
\end{equation}

When the wavespeed $c^2$ is approximated by a constant over each element, it is possible to apply $\LRp{\bm{M}^k_{1/c^2} }^{-1}$ using only the constant values of $J^k, c^2$ over each element and a single reference mass matrix inverse $\bm{M}^{-1}$ over the entire mesh. 
However, inverses of weighted mass matrices are distinct from element to element when $c^2$ possesses sub-element variations.  Typical implementations precompute and store these weighted mass matrix inverses \cite{mercerat2015nodal, bencomo2015discontinuous}, which significantly increases the storage cost of high order DG schemes.  

To address this issue, we use a weight-adjusted discontinuous Galerkin (WADG) is proposed in \cite{chan2017weight,chan2018weight}, which is energy stable and high order accurate for sufficiently regular weighting functions. WADG approximates each weighted mass matrix by a weight-adjusted approximation $\widetilde{\bm{M}}^k_w$
$$\bm{M}^k_w\approx\widetilde{\bm{M}}_w^k=\bm{M}^k\left(\bm{M}^k_{1/w}\right)^{-1}\bm{M}^k.$$
The inverse of $\widetilde{\bm{M}}^k_w$ is then
\begin{equation}
\left(\bm{M}^k_w\right)^{-1}\approx \left(\widetilde{\bm{M}}^k_w\right)^{-1} = \left(\bm{M}^k\right)^{-1}\bm{M}^k_{1/w}\left(\bm{M}^k\right)^{-1}.
\label{eq:WADGscalar}
\end{equation}
Since the weight only appears in $\bm{M}^k_{1/w}$, $\left(\widetilde{\bm{M}}^k_w\right)^{-1}$ can be applied using reference inverse mass matrices and a matrix-free quadrature-based evaluation of $\bm{M}^k_{1/w}$.  
Analogously, the inverse of $\bm{M}_{\bm{C}^{-1}}$ can be approximated by the inverse of a matrix-weighted weight-adjusted mass matrix 
\begin{equation*}
\bm{M}^{-1}_{\bm{C}^{-1}}\approx \LRp{\bm{I}\otimes \bm{M}^{-1}} \bm{M}_{\bm{C}} \LRp{\bm{I}\otimes \bm{M}^{-1}}.
\end{equation*}

In practice, weight-adjusted mass matrix inverses are applied in a matrix-free fashion using sufficiently accurate quadrature rules. We follow \cite{chan2017weight} and use simplicial quadratures which are exact for polynomials of degree $2N+1$ \cite{xiao2010numerical}.  Let $\widehat{\bm{x}}_i,\widehat{\bm{w}}_i$ denote the quadrature points and weights on the reference element $\hat{D}$. We define the interpolation matrix $\bm{V}_q$ as
$$\left(\bm{V}_q\right)_{ij}=\phi_j\left(\hat{\bm{x}}_i\right),$$
whose columns consist of values of basis functions at quadrature points. On each element $D^k$, we have 
\begin{equation*}
\bm{M}^k=J^k\bm{M}=J^k\bm{V}_q^T\textmd{diag}\left(\hat{\bm{w}}\right)\bm{V}_q,\ \ \ 
\bm{M}^k_{c^2}=J^k\bm{V}_q^T\textmd{diag}\left(\bm{d}\right)\bm{V}_q,\ \ \  \bm{d}_i=\frac{\hat{\bm{w}}_i}{c^2\left(\bm{\Phi}^k\hat{\bm{x}}_i\right)}
\end{equation*}
where $\bm{\Phi}^k\hat{\bm{x}}_i$ are quadrature points on $D^k$ and \note{ $c^2\left(\bm{\Phi}^k\hat{\bm{x}}\right)$ denote the values of the wavespeed at quadrature points}.  
Plugging the approximation (\ref{eq:WADGscalar}) into the local DG formulation (\ref{eq:wadginvert}), we obtain
\begin{equation}
\frac{d\bm{U}}{dt}=\left(\bm{M}^k\right)^{-1}\bm{M}^k_{c^2}\left(\bm{M}^k\right)^{-1}\bm{A}_k\bm{U}.
\label{eq:WADGmatrix}
\end{equation}
Evaluating $\left(\bm{M}^k\right)^{-1}\bm{A}_k\bm{U}$ is equivalent to the evaluation of the DG right hand side for a unit weight $1/c^2 = 1$.  
Evaluating the remainder of the right hand side of (\ref{eq:WADGmatrix}) requires applying the product of an unweighted mass matrix and weighted mass matrix. This can be done using quadrature-based matrices as follows:
\begin{equation}
\left(\bm{M}^k\right)^{-1}\bm{M}^k_{c^2} = \bm{P}_q\textmd{diag}\left(\frac{1}{c^2\left(\bm{\Phi}^k\hat{\bm{x}}\right)}\right)\bm{V}_q,
\label{eq:pwadg}
\end{equation}
where $\bm{P}_q=\bm{M}^{-1}\bm{V}_q^T\textmd{diag}\left(\hat{\bm{w}}\right)$ is a quadrature discretization of the polynomial $L^2$ projection operator on the reference element. Moreover, since $\bm{P}_q, \bm{V}_q$ are reference operators, the implementation of (\ref{eq:pwadg}) requires only $O\left(N^d\right)$ storage for values of the wavespeed $c^2\left(\bm{\Phi}^k\hat{\bm{x}}\right)$ at quadrature points for each element.  In contrast, storing full weighted mass matrix inverses or factorizations requires $O\left(N^{2d}\right)$ storage on each element.  For example, in three dimensions, the number of quadrature points on one element scales with $O(N_p)=O(N^3)$, while number of entries in each weighted mass matrix inverse is $O(N_p)\times O(N_p)$, implying an $O(N^6)$ storage requirement.

\section{Discontinuous Galerkin methods for coupled elastic-acoustic wave equations}\label{sec:acouselasdg}

For the first-order acoustic and elastic wave equations, the discontinuous Galerkin schemes (\ref{eq:acousdg}) and (\ref{eq:elasdg}) are consistent and discretely energy stable for a large class of quadrature rules. The goal of this work is to extend these existing schemes to solve wave problems in elastic-acoustic coupled media.  The challenge is to derive an appropriate numerical flux for the interface between acoustic and elastic domains. In this section, we propose a new numerical flux across elastic-acoustic interfaces, and prove the consistency and discrete energy stability of the elastic-acoustic DG formulation under this new flux.  

\subsection{Upwind-like numerical flux}\label{sec:eawave} We begin with the continuity conditions on the interface between different media.  For an acoustic-acoustic interface, the normal velocity and pressure are continuous, i.e.,
\begin{equation*}
\bm{u}^+\cdot\bm{n}=\bm{u}\cdot\bm{n},\qquad p^+ = p.
\label{eq:acouscondition}
\end{equation*}
 For an elastic-elastic interface, the velocity and the traction are continuous, i.e.,
\begin{equation*}
\bm{v}^+=\bm{v},\qquad \bm{A}_n^T\bm{\sigma}^+=\bm{A}_n^T\bm{\sigma}.
\label{eq:elascondition}
\end{equation*}
For an interface 
between elastic and acoustic media, the normal component of the velocity and the traction are continuous, i.e.,
\begin{equation}
\bm{u}\cdot\bm{n}=\bm{v}\cdot\bm{n},\qquad \bm{A}_n^T\bm{\sigma} = p\bm{n},
\label{eq:condition}
\end{equation}
where $\bm{u}$ and $\bm{v}$ denote velocity in acoustic and elastic media, respectively. Based on these continuity conditions, we derive an upwind-like numerical flux for the elastic-acoustic interface. 

For clarity, we will distinguish between acoustic and elastic fluxes at a coupled elastic-acoustic interface.  Let $\Omega_e, \Omega_a$ denote the elastic and acoustic domains, respectively.  Let  $\textcolor{black}{\Gamma_{ea}}$ and  $\textcolor{black}{\Gamma_{ae}}$ denote the respective boundaries of $\Omega_e$ and $\Omega_a$ which correspond to the acoustic-elastic interface.  On $\textcolor{black}{\Gamma_{ae}}$, the numerical fluxes are taken to be
\begin{equation*}
\begin{split}
\frac{1}{2}\bm{n}^T\left(\bm{v}-\bm{u}\right)  +\frac{\tau_p}{2}\bm{n}^T\left(\bm{A}_n^T\bm{\sigma}-p\bm{n}\right)\qquad&\textmd{(pressure)},\\  \frac{1}{2}\bm{n}\bm{n}^T\left(\bm{A}_n^T\bm{\sigma}-p\bm{n}\right)+\frac{\tau_u}{2}\bm{n}\bm{n}^T\left(\bm{v}-\bm{u}\right)\qquad&\textmd{(velocity)},
\end{split}
\end{equation*} 
while the numerical fluxes on $\textcolor{black}{\Gamma_{ea}}$ are given by
\begin{equation*}
\begin{split}
\frac{1}{2}\bm{A}_n\bm{n}\bm{n}^T(\bm{u}-\bm{v})+\frac{\tau_{\sigma}}{2}\bm{A}_n(p\bm{n}-\bm{A}^T_n\bm{\sigma})\qquad&\textmd{(stress)}, \\ \frac{1}{2}\left(p\bm{n}-\bm{A}_n^T\bm{\sigma}-(\bm{I}-\bm{n}\bm{n^T})\bm{A}^T_n\bm{\sigma}\right)+\frac{\tau_v}{2}\bm{n}\bm{n}^T(\bm{u}-\bm{v})\qquad&\textmd{(velocity)}.
\end{split}
\end{equation*}
We now formulate a DG scheme for the first-order elastic-acoustic coupled wave equations.  In the acoustic domain $\Omega_a$, the DG formulation is given by
\begin{equation}
\begin{split}
\left(\frac{1}{c^2}\frac{\partial p}{\partial t},q\right)_{L^2(D^k)} =& \left(\nabla\cdot\bm{u},q\right)_{L^2(D^k)} + \sum_{f\in \partial D^k\cap \textcolor{black}{\Gamma_{aa}}}\left\langle \frac{1}{2}\bm{n}^T[\![\bm{u}]\!]+\frac{\tau_p}{2}[\![p]\!],q\right\rangle_{L^2(f)}\\[1ex]&+\sum_{f\in \partial D^k\cap \textcolor{black}{\Gamma_{ae}}}\left\langle \frac{1}{2}\bm{n}^T\left(\bm{v}-\bm{u}\right)  +\frac{\tau_p}{2}\bm{n}^T\left(\bm{A}_n^T\bm{\sigma}-p\bm{n}\right),q\right\rangle_{L^2(f)}\\[2ex]
\left(\frac{\partial \bm{u}}{\partial t},\bm{w}\right)_{L^2(D^k)} = &\left(\nabla p,\bm{w}\right)_{L^2(D^k)} + \sum_{f\in \partial D^k\cap \textcolor{black}{\Gamma_{aa}}}\left\langle \frac{1}{2}[\![p]\!]\bm{n}+\frac{\tau_u}{2}[\![\bm{u}]\!],\bm{w}\right\rangle_{L^2(f)}\\[1ex]&+\sum_{f\in \partial D^k\cap \textcolor{black}{\Gamma_{ae}}}\left\langle \frac{1}{2}\bm{n}\bm{n}^T\left(\bm{A}_n^T\bm{\sigma}-p\bm{n}\right)+\frac{\tau_u}{2}\bm{n}\bm{n}^T\left(\bm{v}-\bm{u}\right),\bm{w}\right\rangle_{L^2(f)}.
\end{split}
\label{eq:acouselas}
\end{equation}
In the elastic domain $\Omega_e$, the DG formulation is given by
\begin{equation}
\begin{split}
\left(\rho\frac{\partial\bm{v}}{\partial t},\bm{w}\right)_{L^2(D^k)}&=\left(\sum^d_{i=1}\bm{A}_i^T\frac{\partial\bm{\sigma}}{\partial \bm{x}_i},\bm{w}\right)_{L^2(D^k)}+\sum_{f\in \partial D^k\cap \textcolor{black}{\Gamma_{ee}}}\left\langle \frac{1}{2}\bm{A}_n^T[\![\bm{\sigma}]\!]+\frac{\tau_v}{2}\bm{A}_n^T\bm{A}_n[\![\bm{v}]\!],\bm{w}\right\rangle_{L^2(f)}\\[1ex]&+\sum_{f\in \partial D^k\cap \textcolor{black}{\Gamma_{ea}}}\left\langle \frac{1}{2}\left(p\bm{n}-\bm{A}_n^T\bm{\sigma}-(\bm{I}-\bm{n}\bm{n^T})\bm{A}^T_n\bm{\sigma}\right)+\frac{\tau_v}{2}\bm{n}\bm{n}^T(\bm{u}-\bm{v}),\bm{w}\right\rangle_{L^2(f)},\\[2ex]
\left(\bm{C}^{-1}\frac{\partial\bm{\sigma}}{\partial t},\bm{q}\right)_{L^2(D^k)}&=\left(\sum^d_{i=1}\bm{A}_i\frac{\partial\bm{v}}{\partial \bm{x}_i},\bm{q}\right)_{L^2(D^k)}+\sum_{f\in \partial D^k\cap \textcolor{black}{\Gamma_{ee}}}\left\langle \frac{1}{2}\bm{A}_n[\![\bm{v}]\!]+\frac{\tau_{\sigma}}{2}\bm{A}_n\bm{A}_n^T[\![\bm{\sigma}]\!],\bm{q}\right\rangle_{L^2(f)}\\[1ex]&+\sum_{f\in \partial D^k\cap \textcolor{black}{\Gamma_{ea}}}\left\langle \frac{1}{2}\bm{A}_n\bm{n}\bm{n}^T(\bm{u}-\bm{v})+\frac{\tau_{\sigma}}{2}\bm{A}_n(p\bm{n}-\bm{A}^T_n\bm{\sigma}),\bm{q}\right\rangle_{L^2(f)}.
\end{split}
\label{eq:elasacous}
\end{equation}

We note that media heterogeneities are incorporated into the left hand side of the DG formulations (\ref{eq:acouselas}) and (\ref{eq:elasacous}), and that the numerical fluxes are independent of any variations in $1/c^2, \bm{C}^{-1}$.  In our numerical experiments, we approximate the weighted mass matrices induced by micro (sub-cell) heterogeneities in $1/c^2, \bm{C}^{-1}$ by easily invertible weight-adjusted mass matrices as described in Section~\ref{sec:WADG}.

\subsection{Consistency and energy stability}

In this section, we prove that the DG formulations (\ref{eq:acouselas}) and (\ref{eq:elasacous}) are consistent and energy stable in arbitrary heterogeneous media.  

\begin{thm} The coupled discontinuous Galerkin scheme is consistent.  
\end{thm}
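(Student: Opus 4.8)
The plan is to establish consistency in the usual DG sense: the exact (smooth) solution of the coupled system, when inserted into the discrete formulations (\ref{eq:acouselas}) and (\ref{eq:elasacous}), should satisfy them identically for every test function. Since both formulations are written in strong (unintegrated-by-parts) form, the volume terms reproduce the pointwise governing equations (\ref{eq:awave}) on $\Omega_a$ and (\ref{eq:ewave}) on $\Omega_e$, so the exact solution automatically matches the volume contributions. The entire task therefore reduces to showing that every surface (flux) term vanishes when evaluated on the exact solution.

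First I would dispose of the same-medium interfaces. On an acoustic-acoustic face $f\subset\Gamma_{aa}$, the exact pressure and velocity are single-valued, so $[\![p]\!]=0$ and $[\![\bm{u}]\!]=0$, which annihilates both the centered and penalty contributions in (\ref{eq:acouselas}). On an elastic-elastic face $f\subset\Gamma_{ee}$, continuity of velocity gives $[\![\bm{v}]\!]=0$, while continuity of traction gives $\bm{A}_n^T[\![\bm{\sigma}]\!]=0$; together these eliminate every surface term in (\ref{eq:elasdg}). Only the coupled interface then remains.

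The heart of the argument is the coupled interface, where I would substitute the interface conditions (\ref{eq:condition}), namely $\bm{u}\cdot\bm{n}=\bm{v}\cdot\bm{n}$ and $\bm{A}_n^T\bm{\sigma}=p\bm{n}$, into each coupled flux. Using $\bm{n}^T\bm{n}=1$, the condition $\bm{A}_n^T\bm{\sigma}=p\bm{n}$ yields $\bm{n}^T\left(\bm{A}_n^T\bm{\sigma}-p\bm{n}\right)=p-p=0$, while $\bm{u}\cdot\bm{n}=\bm{v}\cdot\bm{n}$ gives $\bm{n}^T\left(\bm{v}-\bm{u}\right)=0$; these two identities immediately force the pressure and velocity fluxes on $\Gamma_{ae}$, and the stress flux on $\Gamma_{ea}$, to vanish. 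The one term requiring extra care is the elastic velocity flux on $\Gamma_{ea}$, which contains the tangential traction piece $(\bm{I}-\bm{n}\bm{n}^T)\bm{A}_n^T\bm{\sigma}$; here I would use that $\bm{A}_n^T\bm{\sigma}=p\bm{n}$ is purely normal, so $(\bm{I}-\bm{n}\bm{n}^T)\bm{A}_n^T\bm{\sigma}=p\left(\bm{n}-\bm{n}(\bm{n}^T\bm{n})\right)=\bm{0}$, together with $p\bm{n}-\bm{A}_n^T\bm{\sigma}=\bm{0}$ and $\bm{n}^T\left(\bm{u}-\bm{v}\right)=0$, to conclude that this flux also vanishes.

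Once all surface terms are shown to vanish, the discrete equations collapse to the weak forms of the governing PDEs, which the exact solution satisfies by construction, completing the proof. I expect the main obstacle to be essentially bookkeeping rather than analysis: correctly tracking the role of the normal matrix $\bm{A}_n$ and the tangential projector $\bm{I}-\bm{n}\bm{n}^T$ so that the tangential-traction term in the $\Gamma_{ea}$ velocity flux is recognized as a consequence of $\bm{A}_n^T\bm{\sigma}=p\bm{n}$ rather than an independent hypothesis. No genuine analytic difficulty arises beyond these algebraic verifications.
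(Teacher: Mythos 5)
Your proposal is correct and follows essentially the same route as the paper: volume terms cancel against the governing equations, and each surface flux is shown to vanish by inserting the continuity conditions at acoustic-acoustic, elastic-elastic, and coupled interfaces, including the key observation that $(\bm{I}-\bm{n}\bm{n}^T)\bm{A}_n^T\bm{\sigma}=(\bm{I}-\bm{n}\bm{n}^T)p\bm{n}=\bm{0}$ for the tangential-traction term on $\Gamma_{ea}$. No substantive difference from the paper's argument.
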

\begin{proof}
	Assume that $\bm{u},p,\bm{v},\bm{\sigma}$ are exact solutions of coupled elastic-acoustic wave equations, and that boundary conditions are imposed through consistent modifications of the numerical flux.\footnote{The stable and consistent imposition of boundary conditions is described in \cite{chan2017weight,chan2018weight}.}. Then, plugging them into (\ref{eq:acouselas}) and (\ref{eq:elasacous}) causes the volume terms to vanish.  Consistency follows if the numerical flux terms also vanish.  
	
	At acoustic-acoustic interfaces, the pressure and normal velocity are continuous.  Thus, the numerical flux reduces to
	$$ \frac{1}{2}\bm{n}^T[\![\bm{u}]\!]+\frac{\tau_p}{2}[\![p]\!]=0,\qquad \frac{1}{2}[\![p]\!]\bm{n}+\frac{\tau_u}{2}[\![\bm{u}]\!]=0.$$
	At elastic-elastic interfaces, the traction $\bm{A}_n^T\bm{\sigma}$ and the velocity are continuous, and the numerical flux reduces to
	$$\frac{1}{2}\bm{A}_n^T[\![\bm{\sigma}]\!]+\frac{\tau_v}{2}\bm{A}_n^T\bm{A}_n[\![\bm{v}]\!]=0,\qquad \frac{1}{2}\bm{A}_n[\![\bm{v}]\!]+\frac{\tau_{\sigma}}{2}\bm{A}_n\bm{A}_n^T[\![\bm{\sigma}]\!]=0.$$
	For an elastic-acoustic interface $\textcolor{black}{\Gamma_{ae}}$ ,we have 
	\begin{equation*}
	\begin{split}
	\frac{1}{2}\bm{n}^T\left(\bm{v}-\bm{u}\right)  +\frac{\tau_p}{2}\bm{n}^T\left(\bm{A}_n^T\bm{\sigma}-p\bm{n}\right)= \frac{\tau_p}{2}\bm{n}^T\left(p\bm{n}-p\bm{n}\right)=0,\\[1ex]
	\frac{1}{2}\bm{n}\bm{n}^T\left(\bm{A}_n^T\bm{\sigma}-p\bm{n}\right)+\frac{\tau_u}{2}\bm{n}\bm{n}^T\left(\bm{v}-\bm{u}\right)=\frac{1}{2}\bm{n}\bm{n}^T\left(p\bm{n}-p\bm{n}\right)=0.
	\end{split}
	\end{equation*}
	Similarly, on $\textcolor{black}{\Gamma_{ea}}$, we have
	\begin{equation*}
	\begin{split}
	\frac{1}{2}\left(p\bm{n}-\bm{A}_n^T\bm{\sigma}-(\bm{I}-\bm{n}\bm{n^T})\bm{A}^T_n\bm{\sigma}\right)+\frac{\tau_v}{2}\bm{n}\bm{n}^T(\bm{u}-\bm{v})=\frac{1}{2}\left(\bm{I}-\bm{n}\bm{n}^T\right)p\bm{n}=0,\\
	\frac{1}{2}\bm{A}_n\bm{n}\bm{n}^T(\bm{u}-\bm{v})+\frac{\tau_{\sigma}}{2}\bm{A}_n(p\bm{n}-\bm{A}^T_n\bm{\sigma})=0.
	\end{split}
	\end{equation*}
	Thus, consistency holds for acoustic-acoustic, elastic-elastic and elastic-acoustic interfaces, which implies the coupled DG scheme is consistent.
\end{proof}

The formulations (\ref{eq:acouselas}) and (\ref{eq:elasacous}) can also be shown to be energy stable for any choice of $\tau_u=\tau_v\geq0, \tau_p=\tau_{\sigma}\geq 0$. For simplicity, we assume zero homogeneous Dirichlet boundary conditions on $\partial \Omega$ in the proof of energy stability. 
\begin{thm}
	The coupled discontinuous Galerkin scheme is energy stable for $\tau_u=\tau_v\geq0, \tau_p=\tau_{\sigma}\geq0$, in the sense that
	\note{
	\begin{equation*}
	\begin{split}
	&\sum_{D^k\in\Omega_h^a}\frac{\partial}{\partial t}\left(\left(\frac{p}{c^2},p\right)_{L^2(D^k)}+\left(\bm{u},\bm{u}\right)_{L^2(D^k)}\right)+
	\sum_{D^k\in\Omega_h^e}\frac{\partial}{\partial t}\left(\left(\rho\bm{v},\bm{v}\right)_{L^2(D^k)}+\left(\bm{C}^{-1}\bm{\sigma},\bm{\sigma}\right)_{L^2(D^k)}\right)\\
	=&-\sum_{f\in \Gamma_{aa}}\int_f\left(\frac{\tau_p}{2}[\![p]\!]^2+\frac{\tau_u}{2}\left(\bm{n}\cdot[\![\bm{u}]\!]\right)^2\right)\note{\mathop{d\bm{x}}}-\sum_{f\in \Gamma_{ee}}\int_f\left(\frac{\tau_u}{2}|\bm{A}_n[\![\bm{v}]\!]|^2+\frac{\tau_p}{2}|\bm{A}_n^T[\![\bm{\sigma}]\!]|^2\right)\note{\mathop{d\bm{x}}}\\&-\sum_{f\in\Gamma_{ea}\cup\Gamma_{ae}}\int_{f}\left(\frac{\tau_u}{2}|\bm{n}^T(\bm{u}-\bm{v})|^2+\frac{\tau_p}{2}|p\bm{n}-\bm{A}_n^T\bm{\sigma}|^2\right)\note{\mathop{d\bm{x}}}\leq 0,
	\end{split}
	\end{equation*}}
	where $\Omega_h^a$ and $\Omega_h^e$ denote the acoustic and elastic computational domain, respectively.
	\label{thm:stable}
\end{thm}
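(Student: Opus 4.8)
The plan is to derive the energy identity by testing each semi-discrete equation against the solution itself and summing over all elements. In the acoustic formulation (\ref{eq:acouselas}) I would take $q=p$ and $\bm{w}=\bm{u}$, and in the elastic formulation (\ref{eq:elasacous}) I would take $\bm{w}=\bm{v}$ and $\bm{q}=\bm{\sigma}$. Since $1/c^2$ and $\bm{C}^{-1}$ are symmetric positive weights, each left-hand side collapses to $\tfrac12\tfrac{\partial}{\partial t}$ of the corresponding energy contribution, so that after summing over $\Omega_h^a$ and $\Omega_h^e$ the combined left-hand side is (half of) the time derivative of the total energy in the statement; the overall factor of $\tfrac12$ is absorbed into the energy, consistent with the convention used there. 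The remaining task is to show that the sum of all volume and flux terms is non-positive and equals the claimed dissipation.

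Next I would dispose of the volume terms by integration by parts. The acoustic volume terms satisfy $\left(\nabla\cdot\bm{u},p\right)_{L^2(D^k)}+\left(\nabla p,\bm{u}\right)_{L^2(D^k)}=\int_{\partial D^k}p\,(\bm{u}\cdot\bm{n})$, and, because the matrices $\bm{A}_i$ are constant, the elastic volume terms satisfy $\left(\sum_i\bm{A}_i^T\partial_{\bm{x}_i}\bm{\sigma},\bm{v}\right)_{L^2(D^k)}+\left(\sum_i\bm{A}_i\partial_{\bm{x}_i}\bm{v},\bm{\sigma}\right)_{L^2(D^k)}=\int_{\partial D^k}\bm{v}^T\bm{A}_n^T\bm{\sigma}$. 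Thus every volume contribution becomes a surface term carrying the physical flux through the element boundary ($p\,\bm{u}\cdot\bm{n}$ on the acoustic side, the traction power $\bm{v}^T\bm{A}_n^T\bm{\sigma}$ on the elastic side), which I can then group face by face with the numerical-flux terms.

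The core of the argument is the face-by-face accounting, split into three cases. At $\Gamma_{aa}$ and $\Gamma_{ee}$ faces the two neighbours are of the same type, and summing the boundary-plus-flux contributions from both sides is the standard telescoping computation already underlying the stability of (\ref{eq:acousdg}) and (\ref{eq:elasdg}): the conservative (non-penalty) parts cancel and the penalty parts assemble into $-\tfrac{\tau_p}{2}[\![p]\!]^2-\tfrac{\tau_u}{2}(\bm{n}\cdot[\![\bm{u}]\!])^2$ and $-\tfrac{\tau_u}{2}|\bm{A}_n[\![\bm{v}]\!]|^2-\tfrac{\tau_p}{2}|\bm{A}_n^T[\![\bm{\sigma}]\!]|^2$ respectively. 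At a coupled face I would add the acoustic-side total (boundary term $p\,u_n$ together with the two $\Gamma_{ae}$ fluxes, tested by $p,\bm{u}$) to the elastic-side total (boundary term $\bm{v}^T\bm{A}_n^T\bm{\sigma}$ together with the two $\Gamma_{ea}$ fluxes, tested by $\bm{v},\bm{\sigma}$), keeping in mind that the two sides carry opposite outward normals. Writing $t=\bm{A}_n^T\bm{\sigma}$, $u_n=\bm{n}\cdot\bm{u}$, $v_n=\bm{n}\cdot\bm{v}$, $t_n=\bm{n}\cdot t$, I expect the conservative parts to collapse to $\tfrac12 v_n p+\tfrac12 u_n t_n$ on the acoustic side and to its exact negative on the elastic side, while the penalty parts (using $\tau_u=\tau_v$, $\tau_p=\tau_\sigma$) combine into $-\tfrac{\tau_u}{2}(u_n-v_n)^2-\tfrac{\tau_p}{2}(p^2-2pt_n+|t|^2)=-\tfrac{\tau_u}{2}|\bm{n}^T(\bm{u}-\bm{v})|^2-\tfrac{\tau_p}{2}|p\bm{n}-\bm{A}_n^T\bm{\sigma}|^2$.

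The main obstacle is exactly this cancellation of the conservative terms at the coupled interface. It hinges on the nonstandard tangential-projection contribution $(\bm{I}-\bm{n}\bm{n}^T)\bm{A}_n^T\bm{\sigma}$ in the $\Gamma_{ea}$ velocity flux together with a consistent choice of the interface normal on the elastic side (opposite to the acoustic side): only with the correct sign does the boundary term $\bm{v}^T\bm{A}_n^T\bm{\sigma}$ combine with the full-vector piece $\bm{v}\cdot t$ of the velocity flux so that the leftover tangential traction power $\tfrac12\bm{v}^T(\bm{I}-\bm{n}\bm{n}^T)t$ is exactly removed and the conservative flux telescopes to zero; without that projection term a nonzero tangential remainder survives. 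Once this cancellation is verified, summing the three cases over all faces and invoking the homogeneous Dirichlet boundary condition to eliminate boundary faces yields the energy balance of the stated form, whose right-hand side is manifestly $\leq 0$ for $\tau_u=\tau_v\geq0$ and $\tau_p=\tau_\sigma\geq0$, establishing energy stability.
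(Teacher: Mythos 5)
Your proposal is correct and follows essentially the same argument as the paper: test with the solution itself, integrate by parts to move volume terms to element boundaries, observe that the central flux contributions on the two sides of a coupled face are exact negatives of one another (because the outward normals flip) once the tangential projection term $(\bm{I}-\bm{n}\bm{n}^T)\bm{A}_n^T\bm{\sigma}$ removes the leftover tangential traction power, and assemble the penalty contributions into the negative squared terms $-\frac{\tau_u}{2}|\bm{n}^T(\bm{u}-\bm{v})|^2-\frac{\tau_p}{2}|p\bm{n}-\bm{A}_n^T\bm{\sigma}|^2$. The only cosmetic difference is that the paper integrates by parts only the pressure divergence term (yielding its ``strong-weak'' form) and cites Theorem~3.1 of \cite{chan2018weight} for the elastic-elastic faces, whereas you carry out the full face-by-face telescoping directly; both routes are equivalent.
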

\begin{proof}
For the acoustic part, taking $q=p,\ \bm{w}=\bm{u}$ and integrating the divergence term of the pressure equation by parts gives
\begin{equation}
\begin{split}
\left(\frac{1}{c^2}\frac{\partial p}{\partial t},p\right)_{L^2(D^k)} =& -\left(\nabla p,\bm{u}\right)_{L^2(D^k)} + \sum_{f\in \partial D^k\cap \textcolor{black}{\Gamma_{aa}}}\left\langle \frac{1}{2}\bm{n}^T\{\!\{\bm{u}\}\!\}+\frac{\tau_p}{2}[\![p]\!],p\right\rangle_{L^2(f)}\\[1ex]&+\sum_{f\in \partial D^k\cap \textcolor{black}{\Gamma_{ae}}}\left\langle \frac{1}{2}\bm{n}^T\left(\bm{v}-\bm{u}\right)  +\frac{\tau_p}{2}\bm{n}^T\left(\bm{A}_n^T\bm{\sigma}-p\bm{n}\right),p\right\rangle_{L^2(f)}\\[2ex]
\left(\frac{\partial \bm{u}}{\partial t},\bm{u}\right)_{L^2(D^k)} = &\left(\nabla p,\bm{u}\right)_{L^2(D^k)} + \sum_{f\in \partial D^k\cap \textcolor{black}{\Gamma_{aa}}}\left\langle \frac{1}{2}[\![p]\!]\bm{n}+\frac{\tau_u}{2}[\![\bm{u}]\!],\bm{u}\right\rangle_{L^2(f)}\\[1ex]&+\sum_{f\in \partial D^k\cap \textcolor{black}{\Gamma_{ae}}}\left\langle \frac{1}{2}\bm{n}\bm{n}^T\left(\bm{A}_n^T\bm{\sigma}-p\bm{n}\right)+\frac{\tau_u}{2}\bm{n}\bm{n}^T\left(\bm{v}-\bm{u}\right),\bm{u}\right\rangle_{L^2(f)}.
\end{split}
\label{eq:strong-weak}
\end{equation}
Adding the pressure and velocity equations together and summing over all element $D^k$ gives
\begin{equation*}
\begin{split}
&\sum_{D^k\in\Omega_h^a}\frac{\partial}{\partial t}\left(\left(\frac{p}{c^2},p\right)_{L^2(D^k)}+\left(\bm{u},\bm{u}\right)_{L^2(D^k)}\right)\\=&-\note{\frac{1}{2}}\sum_{f\in \Gamma_{aa}}\int_f\left(\tau_p[\![p]\!]^2+\tau_u\left(\bm{n}\cdot[\![\bm{u}]\!]\right)^2\right)\note{\mathop{d\bm{x}}}
\\&+\frac{1}{2}\sum_{f\in \Gamma_{ae}}\int_f \left(\bm{u}^T\bm{n}\bm{n}^T\bm{A}_n^T\bm{\sigma}+p\bm{v}^T\bm{n}+\tau_v\bm{u}^T\bm{n}\bm{n}^T\left(\bm{v}-\bm{u}\right)+\tau_pp\bm{n}^T\left(\bm{A}^T_n\bm{\sigma}-p\bm{n}\right)\right)\note{\mathop{d\bm{x}}}
\end{split}
\label{eq:acousenergy}
\end{equation*}

For the elastic part, taking $\bm{q}=\bm{\sigma},\ \bm{w}=\bm{v}$ and Theorem 3.1 in \cite{chan2018weight} gives
\begin{equation*}
\begin{split}
&\sum_{D^k\in\Omega_h^e}\frac{\partial}{\partial t}\left(\left(\rho\bm{v},\bm{v}\right)_{L^2(D^k)}+\left(\bm{C}^{-1}\bm{\sigma},\bm{\sigma}\right)_{L^2(D^k)}\right)\\
=&-\note{\frac{1}{2}}\sum_{f\in \Gamma_{ee}}\int_f\left(\tau_u|\bm{A}_n[\![\bm{v}]\!]|^2+\tau_p|\bm{A}_n^T[\![\bm{\sigma}]\!]|^2\right)\note{\mathop{d\bm{x}}}\\
&+\frac{1}{2}\sum_{f\in \Gamma_{ea}}\int_f \left(\bm{u}^T\bm{n}\bm{n}^T\bm{A}_n^T\bm{\sigma}+p\bm{v}^T\bm{n}+\tau_v\bm{v}^T\bm{n}\bm{n}^T\note{\left(\bm{u}-\bm{v}\right)}+\tau_p\bm{\sigma}^T\bm{A}_n\left(p\bm{n}-\bm{A}_n^T\bm{\sigma}\right)\right)\note{\mathop{d\bm{x}}}.
\end{split}
\label{eq:elasenergy}
\end{equation*}

We first consider the case $\tau_u=\tau_p=0$, which corresponds to a non-dissipative central flux.  Then, adding \note{together contributions from integrals on both $\Gamma_{ae}$ and $\Gamma_{ea}$} and consolidating terms involving normal vectors and normal matrices yields
\begin{equation*}
\begin{split}
&\frac{1}{2}\sum_{f\in\Gamma_{ae}}\int_f \left(\bm{u}^T\bm{n}\bm{n}^T\bm{A}_n^T\bm{\sigma}+p\bm{v}^T\bm{n}\right)\note{\mathop{d\bm{x}}} +\frac{1}{2}\sum_{f\in\Gamma_{ea}}\int_f\left(\bm{u}^T\bm{n}\bm{n}^T\bm{A}_n^T\bm{\sigma}+p\bm{v}^T\bm{n}\right)\note{\mathop{d\bm{x}}}\\
=&\frac{1}{2}\sum_{f\in\Gamma_{ae}}\int_f \left(\bm{u}^T\bm{n}\bm{n}^T\bm{A}_n^T\bm{\sigma}+p\bm{v}^T\bm{n}\right)\note{\mathop{d\bm{x}}} +\frac{1}{2}\sum_{f\in\note{\Gamma_{ae}}}\int_f\left(-\bm{u}^T\bm{n}\bm{n}^T\bm{A}_n^T\bm{\sigma}-p\bm{v}^T\bm{n}\right)\note{\mathop{d\bm{x}}}\\
=&\frac{1}{2}\sum_{f\in\note{\Gamma_{ae}}}\int_f \left(\bm{u}^T\bm{n}\bm{n}^T\bm{A}_n^T\bm{\sigma}+p\bm{v}^T\bm{n} -\bm{u}^T\bm{n}\bm{n}^T\bm{A}_n^T\bm{\sigma}-p\bm{v}^T\bm{n}\right)\note{\mathop{d\bm{x}}}=0.
\end{split}
\end{equation*}
Thus, the contribution from the central portion of the flux sums to zero. Next, we can compute the contribution of penalty fluxes for $\tau_u, \tau_p > 0$
\begin{equation*}
\begin{split}
&\frac{1}{2}\sum_{f\in\Gamma_{ae}}\note{\int_f}\left(\tau_u\bm{u}^T\bm{n}\bm{n}^T\left(\bm{v}-\bm{u}\right)+\tau_pp\bm{n}^T\left(\bm{A}^T_n\bm{\sigma}-p\bm{n}\right)\right)\note{\mathop{d\bm{x}}} \\
&+\frac{1}{2}\sum_{f\in\Gamma_{ea}}\int_f\left(\tau_u\bm{v}^T\bm{n}\bm{n}^T\note{\left(\bm{u}-\bm{v}\right)}+\tau_p\bm{\sigma}^T\bm{A}_n\left(p\bm{n}-\bm{A}_n^T\bm{\sigma}\right)\right)\note{\mathop{d\bm{x}}}\\
=&\frac{1}{2}\sum_{f\in\Gamma_{ae}}\int_f\left(\tau_u\bm{u}^T\bm{n}\bm{n}^T\left(\bm{v}-\bm{u}\right)+\tau_pp\bm{n}^T\left(\bm{A}^T_n\bm{\sigma}-p\bm{n}\right)\right)\note{\mathop{d\bm{x}}}\\
&+\note{\frac{1}{2}\sum_{f\in\Gamma_{ae}}\int_f}\left(\tau_u\bm{v}^T\bm{n}\bm{n}^T(\bm{u}-\bm{v})+\tau_p\bm{\sigma}^T\bm{A}_n\left(p\bm{n}-\bm{A}_n^T\bm{\sigma}\right)\right)\note{\mathop{d\bm{x}}}\\
=&\frac{1}{2}\sum_{f\in\Gamma_{ae}}\int_f\left( -\tau_u\left(\bm{u}-\bm{v}\right)^T\bm{n}\bm{n}^T\note{\left(\bm{u}-\bm{v}\right)}+2\tau_pp\bm{n}^T\bm{A}_n^T\bm{\sigma}-\tau_pp\bm{n}^T\bm{n}p-\tau_p\bm{\sigma}^T\bm{A}_n\bm{A}_n^T\bm{\sigma}\right)\note{\mathop{d\bm{x}}}\\
=&-\frac{1}{2}\sum_{f\in\Gamma_{ae}}\int_f\left( \tau_u|\bm{n}^T\left(\bm{u}-\bm{v}\right)|^2+\tau_p|p\bm{n}-\bm{A}^T_n\bm{\sigma}|^2\right)\note{\mathop{d\bm{x}}}\leq 0.
\end{split}
\end{equation*}
Summing all the contributions, we obtain the desired inequality
 \begin{equation*}
 \begin{split}
\frac{\partial}{\partial t} &\sum_{D^k\in\Omega_h^e}\left(\left(\rho\bm{v},\bm{v}\right)_{L^2(D^k)}+\left(\bm{C}^{-1}\bm{\sigma},\bm{\sigma}\right)_{L^2(D^k)}\right)+\sum_{D^k\in\Omega_h^a}\left(\left(\frac{p}{c^2},p\right)_{L^2(D^k)}+\left(\bm{u},\bm{u}\right)_{L^2(D^k)}\right)\\
 =&-\sum_{f\in \Gamma_{aa}}\int_f\left(\note{\frac{\tau_p}{2}}[\![p]\!]^2+\note{\frac{\tau_u}{2}}\left(\bm{n}\cdot[\![\bm{u}]\!]\right)^2\right)\note{\mathop{d\bm{x}}}-\sum_{f\in \Gamma_{ee}}\int_f\left(\frac{\tau_u}{2}|\bm{A}_n[\![\bm{v}]\!]|^2+\frac{\tau_p}{2}|\bm{A}_n^T[\![\bm{\sigma}]\!]|^2\right)\note{\mathop{d\bm{x}}}\\&-\sum_{f\in\note{\Gamma_{ae}}}\int_{f}\left(\frac{\tau_u}{2}|\bm{n}^T(\bm{u}-\bm{v})|^2+\frac{\tau_p}{2}|p\bm{n}-\bm{A}_n^T\bm{\sigma}|^2\right)\note{\mathop{d\bm{x}}}\leq 0.
 \end{split}
 \end{equation*}

\end{proof}

\subsection{Extension to curvilinear meshes}\label{sec:curvedDG}

The stability of the DG formulations (\ref{eq:acouselas}) and (\ref{eq:elasacous}) in Theorem~\ref{thm:stable} requires the use of integration by parts.  In order to ensure that this same stability holds at the semi-discrete level, integration by parts must hold when integrals are approximated using quadrature.  For affinely mapped simplicial meshes, the geometric terms are constant over each element, such that all spatial integrands on the right-hand side of (\ref{eq:acouselas}) and (\ref{eq:elasacous}) are degree $2N-1$ polynomials. Thus, any quadrature which is exact for at least degree $2N-1$ polynomials is sufficient for stability.  

However, numerous numerical studies demonstrate that, for curved domain boundaries, the use of affinely mapped simplicial meshes limits accuracy to second order\cite{wang2010discontinuous,zhang2015simple,zhang2016curved,Hesthaven2007}.  In this section, we assume the triangulation $\Omega_h$ consists of (possibly curved) elements $D^k$.  Under this assumption, the mapping $\Phi^k$ is no longer affine and the geometric terms are non-constant polynomials within each element.  The resulting spatial integrands in (\ref{eq:acouselas}) and (\ref{eq:elasacous}) are now degree $4N-3$ polynomials, while the surface integrands are degree $4N-2$ polynomials. Thus, the strength of quadrature required to ensure semi-discrete energy stability of the formulations (\ref{eq:acouselas}) and (\ref{eq:elasacous}) is significantly higher for curved meshes than for affine meshes.

We sidestep these quadrature accuracy requirements on curvilinear meshes by using a ``strong-weak'' DG formulation, where we discretize the intermediate DG formulation (\ref{eq:strong-weak}) in Theorem~\ref{thm:stable}.  Similar formulations have been used to guarantee stability under non-standard basis functions \cite{warburton2013low,chan2018multi,KOZDON2019483}.  Because the formulation (\ref{eq:strong-weak}) has already been integrated by parts, the proof of energy stability does not require integrals to be exactly evaluated using quadrature. \note{This quadrature-agnostic stability avoids instability and spurious solution growth for under-integrated DG discretizations on curved meshes \cite{kopriva2016geometry}. }
 However, it does require an explicit quadrature-based discretization, as opposed to a quadrature-free discretization \cite{Hesthaven2007,atkins1998quadrature}.  

We outline the matrices involved in a quadrature-based DG discretization in the following section.  For simplicity, we now assume constant wavespeed $c=1$, such that the strong-weak formulation for the acoustic wave equation is given by
\begin{equation}
\begin{split}
\int_{D^k}\frac{1}{c^2}\frac{\partial p}{\partial t}q&=-\int_{D^{k}}\bm{u}\cdot\nabla q+\int_{\partial D^k}\frac{1}{2}\left(\{\!\{\bm{u}\}\!\}\cdot\bm{n}+\tau_p[\![p]\!]\right)q,\\
\int_{D^k}\frac{\partial \bm{u}}{\partial t}\cdot\bm{w}&=\int_{D^{k}}\nabla p\cdot\bm{w}+\int_{\partial D^k}\frac{1}{2}\left([\![p]\!]+\tau_u[\![\bm{u}]\!]\cdot\bm{n}\right)\bm{w}\cdot\bm{n}.
\end{split}
\label{eq:swaw}
\end{equation}
The mass matrix $\bm{M}^k$ is replaced by a weighted mass matrix with weight $J^k$, which we approximate using a weight-adjusted approximation, i.e.\
$$\left(\bm{M}^k\right)^{-1}\bm{A}^k_h\bm{U}=\bm{M}^{-1}\bm{M}_{1/J^k}\bm{M}^{-1}\bm{A}^k_h\bm{U}.$$
Now, we consider the volume contribution in the pressure equation, i.e.
$$\int_{D^k}\bm{u}\cdot\nabla q=\int_{\hat{D}}\left(\bm{u}_1\frac{\partial q}{\partial x}+\bm{u}_2\frac{\partial q}{\partial y}+\bm{u}_3\frac{\partial q}{\partial z}\right)J^k.$$
This contribution becomes more involved to evaluate due to the face that derivatives now lie on the pressure test function $q$. We follow \cite{chan2017curved,warburton2013low} and evaluate this contribution as
$$\left(\bm{V}_q^{\hat{x}}\right)^T\bm{U}_q^{\hat{x}}+\left(\bm{V}_q^{\hat{y}}\right)^T\bm{U}_q^{\hat{y}}+\left(\bm{V}_q^{\hat{z}}\right)^T\bm{U}_q^{\hat{z}},$$
where $\left(\bm{V}_q^{\hat{x}}\right)^T$ are quadrature-based differentiation matrices defined by
$$\left(\bm{V}_q^{\hat{x}}\right)_{ij}=\frac{\partial \phi_j}{\partial \hat{x}}(\bm{x}_i),\qquad i=1,\dots,N_q.$$
The terms $\bm{U}_q^{\hat{\bm{x}}_i}$ are defined at quadrature points as
\begin{equation*}
\begin{split}
\bm{U}_q^{\hat{x}}&=\textmd{diag}\left(\bm{J}_q\right)\left(\textmd{diag}(\bm{x}_{\hat{x}})\bm{V}_q\bm{U}_1+\textmd{diag}\left(\bm{y}_{\hat{x}}\right)\bm{V}_q\bm{U}_2+\textmd{diag}\left(\bm{z}_{\hat{x}}\right)\bm{V}_q\bm{U}_3\right),\\
\bm{U}_q^{\hat{y}}&=\textmd{diag}\left(\bm{J}_q\right)\left(\textmd{diag}(\bm{x}_{\hat{y}})\bm{V}_q\bm{U}_1+\textmd{diag}\left(\bm{y}_{\hat{y}}\right)\bm{V}_q\bm{U}_2+\textmd{diag}\left(\bm{z}_{\hat{y}}\right)\bm{V}_q\bm{U}_3\right),\\
\bm{U}_q^{\hat{z}}&=\textmd{diag}\left(\bm{J}_q\right)\left(\textmd{diag}(\bm{x}_{\hat{z}})\bm{V}_q\bm{U}_1+\textmd{diag}\left(\bm{y}_{\hat{z}}\right)\bm{V}_q\bm{U}_2+\textmd{diag}\left(\bm{z}_{\hat{z}}\right)\bm{V}_q\bm{U}_3\right),
\end{split}
\end{equation*}
where $\bm{x}_{\hat{x}},\dots$ are evaluations of geometric factors at quadrature points and $\bm{U}_i$ denotes the vector of degrees of freedom for the $i$th velocity component $\bm{u}_i$.  The surface contributions are treated similarly. 


\section{Numerical experiments}\label{sec:numerical}
In this section, we demonstrate the high order convergence and geometric flexibility of the proposed method. In Section~\ref{sec:spectra}, we verify that the semi-discrete scheme is energy stable by computing the spectra of the proposed DG schemes.  In Section~\ref{sec:classical}, we test our method on several classical interface problems with known analytical solutions.  In Section~\ref{sec:curved}, we implement the proposed scheme on curvilinear meshes and perform convergence analyses.  In all numerical experiments, we always choose penalty parameters such that  $\tau_u=\tau_v$ and $\tau_p=\tau_\sigma$.  

\subsection{Spectra and choice of penalty parameter}\label{sec:spectra}
We first verify the energy stability of the proposed method for arbitrary heterogeneous media. We follow the approach in \cite{chan2018weight} and construct a random stiffness matrix using similarity transforms, such that at every quadrature point, $\bm{C}\left(\bm{x}\right)=\bm{U}\bm{D}\bm{U}^T$, where $\bm{D}$ is diagonal matrix with random positive entries $d_{min}\leq \bm{D}_{ii}\leq d_{max}$ and $\bm{U}$ is a random unitary matrix.  For the wavespeed in the acoustic media, we generate positive random values $c_{min}\leq c(\bm{x})\leq c_{max}$ at quadrature nodes.

Let $\bm{L}$ denote the matrix induced by the global semi-discrete DG formulation, such that the time evolution of the global solution is governed by 
$$\frac{\partial\bm{Q}}{\partial t}=\bm{L}\bm{Q}$$
with $\bm{Q}$ denotes a vector of degrees of freedom for $(\bm{u},p,\bm{v},\bm{\sigma})$.  Figure~\ref{fig:affinespectra} shows computed eigenvalues of $L$ for different penalty parameters under discretization parameters $N=3$ and $h=1/4$. In both cases, the largest real part of any eigenvalue is $O(10^{-14})$, verifying that the proposed methods are energy stable up to machine precision.  

For practical simulations, taking $\tau_u,\tau_p>0$ results in damping of under-resolved spurious components of the solution.  However, a naive selection of these parameters can result in a more restrictive time-step restriction for stability. We wish to choose $\tau_u,\tau_p$ as large as possible without increasing the spectra of $\bm{L}$ when using a central flux (i.e. $\tau_u=\tau_p=0$). In Figure~\ref{fig:affinespectra}, we observe that the spectra of $\bm{L}$ for a central flux is roughly half as large as the spectra of $\bm{L}$ when taking $\tau_u=\tau_p=1$. We note that the growth in spectra is due to the large negative real part of the extremal eigenvalues of $\bm{L}$, which consistent with the observation that a subset of eigenvalue of $\bm{L}$ approach $-\infty$ as the penalty parameters increase\cite{chan2017penalty}. Moreover, when we take $\tau_u=\tau_p=0.5$, the largest real part and imaginary part are most have the same magnitude, which indicates that we can add a dissipative term without shortening the time-step size.  

\begin{figure}
	\setcounter{subfigure}{0}
	\subfloat[$\tau_u=\tau_p=0$]{\includegraphics[width=0.35\linewidth]{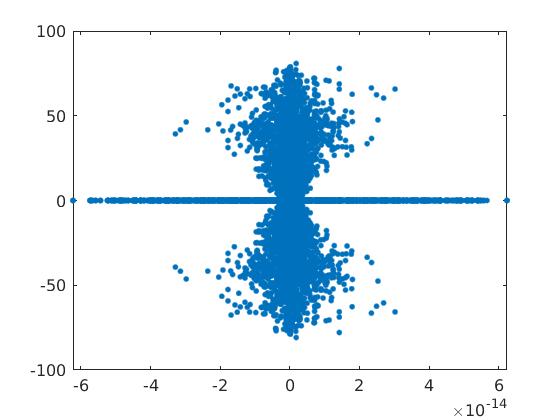}}
	\subfloat[$\tau_u=\tau_p=\frac{1}{2}$]{\includegraphics[width=0.35\linewidth]{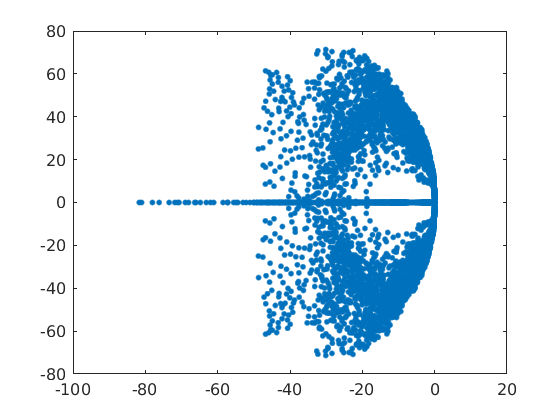}}
	\subfloat[$\tau_u=\tau_p=1$]{\includegraphics[width=0.35\linewidth]{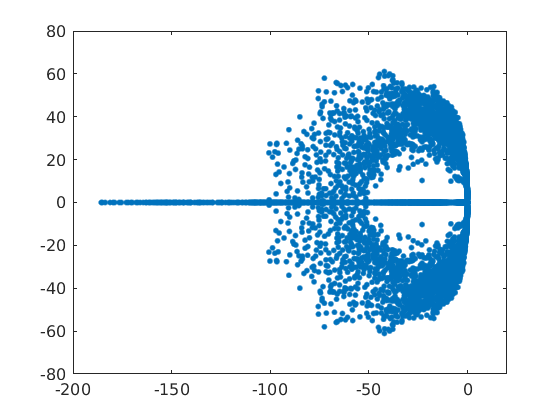}}
\caption{Spectra for $N=3$ on a non-curved uniform mesh with $h=1/4$.  For all cases, the largest real part of the spectra is $O(10^{-14})$.}
\label{fig:affinespectra}
\end{figure}

\subsection{Classical interface problems}\label{sec:classical}

In the following section, we show that the proposed DG method exhibits high order convergence for two classical interface problems: Snell's law and the Scholte wave.  

\subsubsection{Snell's law for an elastic-acoustic interface}
In this experiment, we study the convergence rate of the proposed method for the Snell's law, which models a pressure plane wave incident to an acoustic-elastic interface. The incident wave is reflected as a pressure wave in the acoustic media and transmitted as longitudinal and transverse waves in the elastic media. We follow the problem setting given in \cite{wilcox2010high}. For an incident displacement wave of the form,
$$\bm{w}_{ip}\left(\bm{x},t\right)=C_{ip}\bm{d}_{ip}\cos\left(\kappa_{p1} \left[x_1\sin\left(\alpha_{ip}\right)+x_2\cos\left(\alpha_{ip}\right)\right]-\omega t\right),$$
the reflected displacement wave is 
$$\bm{w}_{rp}\left(\bm{x},t\right)=C_{rp}\bm{d}_{rp}\cos\left(\kappa_{p1} \left[x_1\sin\left(\alpha_{rp}\right)-x_2\cos\left(\alpha_{rp}\right)\right]-\omega t\right).$$
The transmitted longitudinal displacement wave is 
$$\bm{w}_{tp}\left(\bm{x},t\right)=C_{tp}\bm{d}_{tp}\cos\left(\kappa_{p2} \left[x_1\sin\left(\alpha_{tp}\right)+x_2\cos\left(\alpha_{tp}\right)\right]-\omega t\right),$$
and the transmitted transverse displacement wave is 
$$\bm{w}_{ts}\left(\bm{x},t\right)=C_{ts}\bm{d}_{ts}\cos\left(\kappa_{s2} \left[x_1\sin\left(\alpha_{ts}\right)+x_2\cos\left(\alpha_{ts}\right)\right]-\omega t\right).$$
Here, $\omega$ is the angular frequency; $\kappa_{p1}$, $\kappa_{p2}$, and $\kappa_{s2}$ are wavenumbers of the respective waves and $\alpha_{ip}$, $\alpha_{rp}$, $\alpha_{tp}$ and $\alpha_{ts}$ are the associated propagation angles. The displacement directions are
$$\bm{d}_{ip}=\begin{pmatrix}\sin\left(\alpha_{ip}\right)\\\cos\left(\alpha_{ip}\right)\end{pmatrix},\quad \bm{d}_{rp}=\begin{pmatrix}\sin\left(\alpha_{rp}\right)\\-\cos\left(\alpha_{rp}\right)\end{pmatrix},\quad \bm{d}_{tp}=\begin{pmatrix}\sin\left(\alpha_{tp}\right)\\\cos\left(\alpha_{tp}\right)\end{pmatrix},\quad
\bm{d}_{ts}=\begin{pmatrix}-\cos\left(\alpha_{ts}\right)\\\sin\left(\alpha_{ts}\right)\end{pmatrix}.$$
The overall displacement can be written as 
$$\bm{u}\left(\bm{x},t\right)=\begin{cases} \bm{w}_{ip}\left(\bm{x},t\right)+\bm{w}_{rp}\left(\bm{x},t\right), &\textmd{if}\  x_2<0,\\
\bm{w}_{tp}\left(\bm{x},t\right)+\bm{w}_{ts}\left(\bm{x},t\right), &\textmd{otherwise}. \end{cases}$$
The wave speeds in each layer are given by
$$c_{p1}=\sqrt{\frac{\lambda_1+2\mu_1}{\rho_1}},\qquad c_{p2}=\sqrt{\frac{\lambda_2+2\mu_2}{\rho_2}},\qquad c_{s2}=\sqrt{\frac{\mu_2}{\rho_2}},$$
and the corresponding wavenumbers can be computed from the angular frequency
$$\kappa_{p1}=\frac{\omega}{c_{p1}},\qquad \kappa_{p2}=\frac{\omega}{c_{p2}},\qquad \kappa_{s2}=\frac{\omega}{c_{s2}}.$$
Through Snell's Law, the propagation angles are related to the incident angle $\alpha_{ip}$
$$\frac{\sin\left(\alpha_{ip}\right)}{c_{p1}}=\frac{\sin\left(\alpha_{rp}\right)}{c_{p1}}=\frac{\sin\left(\alpha_{tp}\right)}{c_{p2}}=\frac{\sin\left(\alpha_{ts}\right)}{c_{s2}}.$$
The amplitudes of the reflected and transmitted waves are related to the incident wave amplitude
\begin{equation*}
\begin{split}
C_{rp}&=C_{ip} \frac{Z_{p2}\left(\cos\left(2\alpha_{ts}\right)\right)^2+Z_{s2}\left(\sin\left(2\alpha_{ts}\right)\right)^2-Z_{p1}}{Z_{p2}\left(\cos\left(2\alpha_{ts}\right)\right)^2+Z_{s2}\left(\sin\left(2\alpha_{ts}\right)\right)^2+Z_{p1}},\\
C_{tp}&=C_{ip}\frac{c_{p1}\rho_1}{c_{p2}\rho_2} \frac{2Z_{p2}\cos\left(2\alpha_{ts}\right)}{Z_{p2}\left(\cos\left(2\alpha_{ts}\right)\right)^2+Z_{s2}\left(\sin\left(2\alpha_{ts}\right)\right)^2+Z_{p1}},\\
C_{ts}&=C_{ip}\frac{c_{p1}\rho_1}{c_{s2}\rho_2} \frac{2Z_{s2}\sin\left(2\alpha_{ts}\right)}{Z_{p2}\left(\cos\left(2\alpha_{ts}\right)\right)^2+Z_{s2}\left(\sin\left(2\alpha_{ts}\right)\right)^2+Z_{p1}},
\end{split}
\end{equation*}
where 
$$Z_{p1}=\frac{\rho_1c_{p1}}{\cos\left(\alpha_{ip}\right)},\qquad Z_{p2}=\frac{\rho_2c_{p2}}{\cos\left(\alpha_{tp}\right)},\qquad Z_{s2}=\frac{\rho_2c_{s2}}{\cos\left(\alpha_{ts}\right)}.$$
We compute the solution for the specific case of $c_{p1}=1$, $\rho_1=1$, $c_{p2}=3$, $c_{s2}=2$, $\rho_2=1$, $\omega=2\pi$, $\alpha_{ip}=0.2$, and $C_{ip}=1.0$. The computational domain is $[-1,1]^2$ and the exact solution is prescribed by tractions on the boundary. Uniform tetrahedral meshes are used in the experiment.  Figure~\ref{fig:snell} shows the convergence of $L^2$ errors under mesh refinement for both central fluxes and dissipative penalty fluxes.  Optimal $O(h^{N+1})$ rates of convergence are observed for the penalty flux, while an ``odd-even'' convergence pattern is observed for the central flux.  
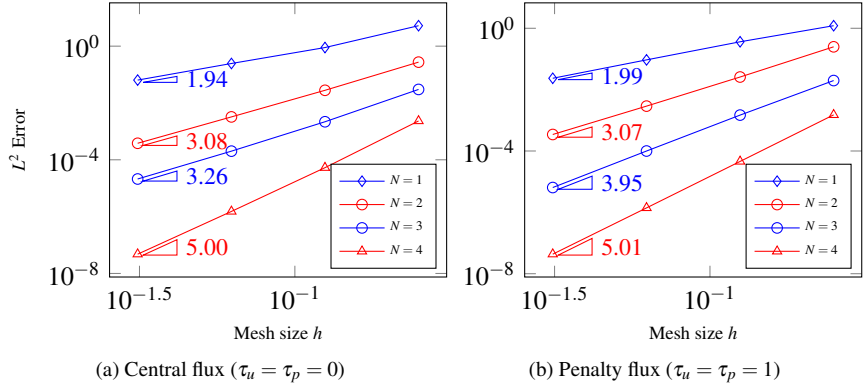
\begin{figure}
\centering
\setcounter{subfigure}{0}
\subfloat[Central flux ($\tau_u=\tau_p=0$)]{
	\begin{tikzpicture}
	\begin{loglogaxis}[
	width=0.5\textwidth,
	title style = {font=\large},
	legend style = {font=\tiny},
	xlabel=Mesh size $h$,
	ylabel=$L^2$ Error,
	xlabel style= {font=\scriptsize},
	ylabel style= {font=\scriptsize},
	legend pos = south east
	]
	\addplot[color=blue,mark=diamond] coordinates {
		(0.25,5.24405)
		(0.125, 0.892497)
		(0.0625,0.245341)
		(0.0312,0.0638811)
	};
	\addplot[color=red,mark=o] coordinates {
		(0.25,0.273655)
		(0.125, 0.0279257)
		(0.0625,0.00323484)
		(0.0312,0.000381534)
	};
	
	\addplot[color=blue,mark=o] coordinates {
		(0.25,0.0299595)
		(0.125, 0.00218157)
		(0.0625,0.000201572)
		(0.0312,0.0000210932)
	};
	\addplot[color=red,mark=triangle] coordinates {
		(0.25,0.0023262)
		(0.125, 0.000052822)
		(0.0625,0.00000152304)
		(0.0312,0.0000000476848)
	};
    \logLogSlopeTriangle{0.2}{0.1}{0.71}{1.94}{blue};
	\logLogSlopeTriangle{0.2}{0.1}{0.48}{3.08}{red};
	\logLogSlopeTriangle{0.2}{0.1}{0.35}{3.26}{blue};
	\logLogSlopeTriangle{0.2}{0.1}{0.08}{5.00}{red};
	\legend{$N=1$,$N=2$,$N=3$,$N=4$}
	\end{loglogaxis}
	\end{tikzpicture}}
\subfloat[Penalty flux ($\tau_u=\tau_p=1$)]{
\begin{tikzpicture}
\begin{loglogaxis}[
width=0.5\textwidth,
title style = {font=\large},
legend style = {font=\tiny},
xlabel=Mesh size $h$,
xlabel style= {font=\scriptsize},
ylabel style= {font=\scriptsize},
legend pos = south east
]
\addplot[color=blue,mark=diamond] coordinates {
	(0.25,1.21227)
	(0.125, 0.36274)
	(0.0625,0.0932777)
	(0.0312,0.0234593)
};
\addplot[color=red,mark=o] coordinates {
	(0.25,0.248666)
	(0.125, 0.0257738)
	(0.0625,0.00290105)
	(0.0312,0.000345199)
};

\addplot[color=blue,mark=o] coordinates {
	(0.25,0.0197668)
	(0.125, 0.00147982)
	(0.0625,0.000099811)
	(0.0312,0.00000644744)
};
\addplot[color=red,mark=triangle] coordinates {
	(0.25,0.00150798)
	(0.125, 0.0000456722)
	(0.0625,0.00000139757)
	(0.0312,0.0000000433868)
};
\logLogSlopeTriangle{0.2}{0.1}{0.72}{1.99}{blue};
\logLogSlopeTriangle{0.2}{0.1}{0.51}{3.07}{red};
\logLogSlopeTriangle{0.2}{0.1}{0.32}{3.95}{blue};
\logLogSlopeTriangle{0.2}{0.1}{0.08}{5.01}{red};
\legend{$N=1$,$N=2$,$N=3$,$N=4$}
\end{loglogaxis}
\end{tikzpicture}}
\caption{Convergence of $L^2$ errors for the Snell's law solution.}
\label{fig:snell}
\end{figure}

\subsubsection{Scholte wave}
Scholte waves are boundary waves that propagate along elastic-acoustic interfaces. This problem is designed to the test numerical flux between acoustic and elastic media. In our problem setting, we consider two half-spaces: the upper half, $x_2>0$, is fluid with acoustic material parameters $\lambda_1$, $\mu_1=0$, and $\rho_1$. The lower half, $x_2<0$, is solid with elastic material parameters $\lambda_2$, $\mu_2$, and $\rho_2$.  The displacement of a Scholte wave in the acoustic region is given by
\begin{equation*}
\begin{split}
u_1&=\operatorname{Re}\left(i\kappa B_1e^{-\kappa b_{1p}x_2}e^{i\left(\kappa x_1-\omega t\right)}\right),\\ 
u_2&=\operatorname{Re}\left(-\kappa b_{1p}B_1e^{-\kappa b_{1p}x_2}e^{i\left(\kappa x_1-\omega t\right)}\right),
\end{split}
\end{equation*}
and in the elastic region by
\begin{equation*}
\begin{split}
u_1&=\operatorname{Re}\left(\left(i\kappa B_2e^{\kappa b_{2p}x_2}-\kappa b_{2s}B_3e^{\kappa b_{2s}x_2}\right)e^{i\left(\kappa x_1-\omega t\right)}\right),\\ 
u_2&=\operatorname{Re}\left(\left(\kappa b_{2p}B_2e^{\kappa b_{2p}x_2}+ikB_3e^{\kappa b_{2s}x_3}\right)e^{i\left(\kappa x_1-\omega t\right)}\right).
\end{split}
\end{equation*}
The wavenumber is $\kappa=\frac{\omega}{c}$, with decay rates
$$b_{1p}=\left(1-\frac{c^2}{c_{1p}^2}\right)^{\frac{1}{2}}, \qquad b_{2p}=\left(1-\frac{c^2}{c_{2p}^2}\right)^{\frac{1}{2}},\qquad b_{2s}=\left(1-\frac{c^2}{c_{2s}^2}\right)^{\frac{1}{2}},$$
where $c$ is the Scholte wavespeed. The longitudinal and transverse wavespeeds are
$$c_{1p}=\sqrt{\frac{\lambda_1+2\mu_1}{\rho_1}},\qquad c_{2p}=\sqrt{\frac{\lambda_2+2\mu_2}{\rho_2}},\qquad c_{2s}=\sqrt{\frac{\mu_2}{\rho_2}}.$$
The wave amplitudes are related to each other through the interface condition (\ref{eq:condition})
\begin{equation}
\begin{split}
2i\left(1-\frac{c^2}{c^2_{2p}}\right)^{\frac{1}{2}}B_2-\left(2-\frac{c^2}{c^2_{2s}}\right)B_3&=0,\\
\frac{c^2}{c^2_{2s}}B_1+\frac{\rho_2}{\rho_1}\left(2-\frac{c^2}{c^2_{2s}}\right)B_2+2i\frac{\rho_2}{\rho_1}\left(1-\frac{c^2}{c^2_{2s}}\right)^{\frac{1}{2}}B_3&=0,\\
\left(1-\frac{c^2}{c^2_{1p}}\right)^{\frac{1}{2}}B_1+\left(1-\frac{c^2}{c^2_{2p}}\right)^{\frac{1}{2}}B_2+iB_3&=0.
\end{split}
\label{eq:amplitude}
\end{equation}
The Scholte wavespeed $c$ is chosen such that the determinant of (\ref{eq:amplitude}) is zero, and $c$ satisfies
$$\left(\frac{\rho_1}{\rho_2}b_{2p}+b_{1p}\right)r^4-4b_{1p}r^2-4b_{1p}\left(b_{2p}b_{2s}-1\right)=0,$$
where $r=c/c_{2s}$.

We choose the acoustic and elastic material parameters as $\lambda_1=1,\ \rho_1=1,\ \mu_1=0$, and $\lambda_2=\mu_2=1,\ \rho_2=1$. For these material parameters, we obtain $c=0.7110017230197$ and choose $B_1=-i0.3594499773037$, $B_2=-i0.8194642725978$, and $B_3=1$. In our experiment, we choose a uniform mesh with different size $h$ covering a square domain $\left[-1,1\right]^2$. As with Snell's law, we investigate the convergence rates of the proposed method for a central flux ($\tau_u=\tau_p=0$) and a penalty flux ($\tau_u=\tau_p=1$).
\begin{figure}
	\centering
	\setcounter{subfigure}{0}
	\subfloat[Central flux ($\tau_u=\tau_p=0$)]{
		\begin{tikzpicture}
		\begin{loglogaxis}[
		width=0.5\textwidth,
		title style = {font=\large},
		legend style = {font=\tiny},
		xlabel=Mesh size $h$,
		ylabel=$L^2$ Error,
		xlabel style= {font=\scriptsize},
		ylabel style= {font=\scriptsize},
		legend pos = south east
		]
		\addplot[color=blue,mark=diamond] coordinates {
			(0.25,0.351501)
			(0.125, 0.185117)
			(0.0625,0.0959123)
			(0.0312,0.0491434)
		};
		
		\addplot[color=red,mark=o] coordinates {
			(0.25,0.0335087)
			(0.125, 0.00669997)
			(0.0625,0.00144653)
			(0.0312,0.000333904)
		};

		\addplot[color=blue,mark=o] coordinates {
			(0.25,0.00386704)
			(0.125, 0.000526297)
			(0.0625,0.0000683133)
			(0.03125,0.00000868354)
		};
		\addplot[color=red,mark=triangle] coordinates {
			(0.25,0.000212605)
			(0.125, 0.0000117583)
			(0.0625,0.000000659653)
			(0.0312,0.0000000384745)
		};
		\logLogSlopeTriangle{0.2}{0.1}{0.805}{0.96}{blue};
		\logLogSlopeTriangle{0.2}{0.1}{0.55}{2.12}{red};
		\logLogSlopeTriangle{0.2}{0.1}{0.36}{2.98}{blue};
		\logLogSlopeTriangle{0.2}{0.1}{0.08}{4.10}{red};
		\legend{$N=1$,$N=2$,$N=3$,$N=4$}
		\end{loglogaxis}
		
		\end{tikzpicture}}
		\subfloat[Penalty flux ($\tau_u=\tau_p=1$)]{
\begin{tikzpicture}
\begin{loglogaxis}[
width=0.5\textwidth,
title style = {font=\large},
legend style = {font=\tiny},
xlabel=Mesh size $h$,
xlabel style= {font=\scriptsize},
ylabel style= {font=\scriptsize},
legend pos = south east
]

\addplot[color=blue,mark=diamond] coordinates {
	(0.25,0.124395)
	(0.125, 0.0359558)
	(0.0625,0.00968633)
	(0.0312,0.00259978)
};

\addplot[color=red,mark=o] coordinates {
	(0.25,0.0159341)
	(0.125, 0.00190554)
	(0.0625,0.000233487)
	(0.0312,0.0000292817)
};

\addplot[color=blue,mark=o] coordinates {
	(0.25,0.00094556)
	(0.125, 0.0000635446)
	(0.0625,0.0000040409)
	(0.03125,0.000000253035)
};
\addplot[color=red,mark=triangle] coordinates {
	(0.25,0.0000344301)
	(0.125, 0.00000113083 )
	(0.0625,0.0000000360469)
	(0.0312,0.00000000122477)
};
\logLogSlopeTriangle{0.2}{0.1}{0.735}{1.90}{blue};
\logLogSlopeTriangle{0.2}{0.1}{0.535}{3.00}{red};
\logLogSlopeTriangle{0.2}{0.1}{0.32}{4.00}{blue};
\logLogSlopeTriangle{0.2}{0.1}{0.08}{4.88}{red};
\legend{$N=1$,$N=2$,$N=3$,$N=4$}
\end{loglogaxis}

\end{tikzpicture}}

\caption{Convergence of $L^2$ errors for the Scholte wave solution.}
\label{fig:Scholte}
\end{figure}
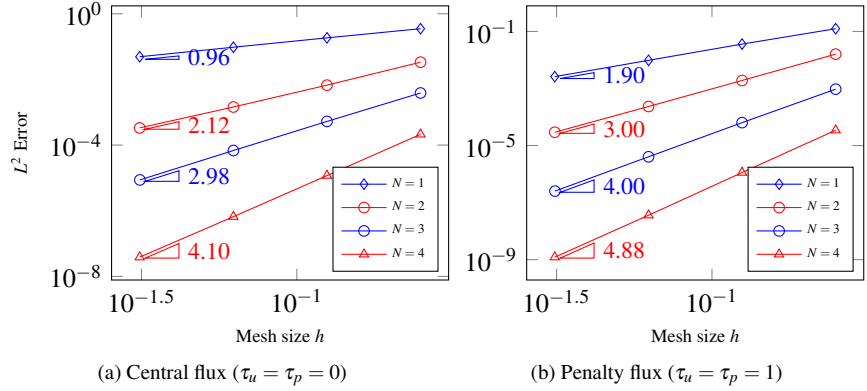

Figures~\ref{fig:snell} and ~\ref{fig:Scholte} show $L^2$ error for the Snell's law and Scholte waves at time $T=5$, respectively. For penalty fluxes, the computed convergence rate is close to the optimal rate of $O(h^{N+1})$. For central fluxes, we observe again an odd-even pattern, though the rate of convergence is one order lower than observed for Snell's law.  

\note{We also computed Scholte wave solutions using more realistic material coefficients from \cite{ye2016discontinuous}. The fluid media is homogeneous isotropic with an acoustic wavespeed of 1.5 km/s and density 1.0 g/cm\textsuperscript{3}. The solid media is homogeneous and isotropic with a P-wave speed of 3.0 km/s and an S-wave speed of 1.5 km/s, with a density of 2.5 g/cm\textsuperscript{3}. Errors for a Scholte wave solution at time $T=1$ are shown in Figure~\ref{fig:Scholte2}.}
	
	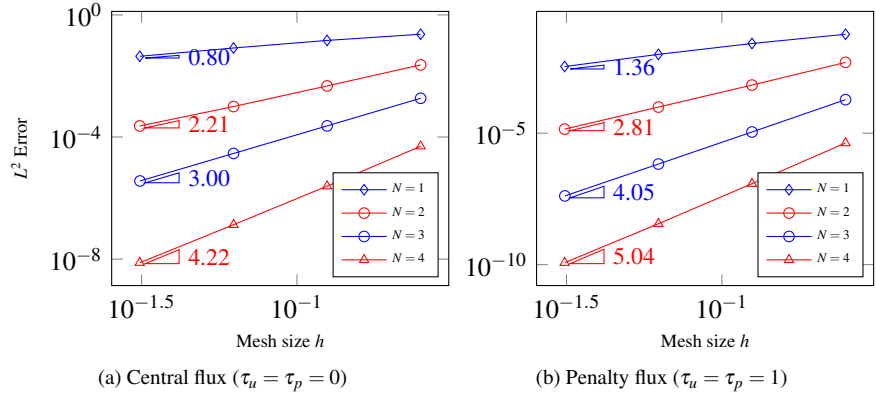
\begin{figure}[h]
		\centering
		\setcounter{subfigure}{0}
		\subfloat[Central flux ($\tau_u=\tau_p=0$)]{
			\begin{tikzpicture}
			\begin{loglogaxis}[
			width=0.5\textwidth,
			title style = {font=\large},
			legend style = {font=\tiny},
			xlabel=Mesh size $h$,
			ylabel=$L^2$ Error,
			xlabel style= {font=\scriptsize},
			ylabel style= {font=\scriptsize},
			legend pos = south east
			]
			\addplot[color=blue,mark=diamond] coordinates {
				(0.25,0.232836)
				(0.125, 0.146085)
				(0.0625,0.0826024)
				(0.0312,0.0441329)
			};
			
			\addplot[color=red,mark=o] coordinates {
				(0.25,0.0230903)
				(0.125, 0.00471466)
				(0.0625,0.000997027)
				(0.0312,0.000230043)
			};

			\addplot[color=blue,mark=o] coordinates {
				(0.25,0.00185812)
				(0.125, 0.000231572)
				(0.0625,2.89548e-05)
				(0.03125,3.62749e-06)
			};
			\addplot[color=red,mark=triangle] coordinates {
				(0.25,4.98633e-05)
				(0.125, 2.45675e-06)
				(0.0625,1.33825e-07)
				(0.0312,7.62032e-09)
			};
			\logLogSlopeTriangle{0.2}{0.1}{0.83}{0.80}{blue};
			\logLogSlopeTriangle{0.2}{0.1}{0.575}{2.21}{red};
			\logLogSlopeTriangle{0.2}{0.1}{0.375}{3.00}{blue};
			\logLogSlopeTriangle{0.2}{0.1}{0.08}{4.22}{red};
			\legend{$N=1$,$N=2$,$N=3$,$N=4$}
			\end{loglogaxis}
			
			\end{tikzpicture}}
		\subfloat[Penalty flux ($\tau_u=\tau_p=1$)]{
			\begin{tikzpicture}
			\begin{loglogaxis}[
			width=0.5\textwidth,
			title style = {font=\large},
			legend style = {font=\tiny},
			xlabel=Mesh size $h$,
			xlabel style= {font=\scriptsize},
			ylabel style= {font=\scriptsize},
			legend pos = south east
			]
			
			\addplot[color=blue,mark=diamond] coordinates {
				(0.25,0.0577376)
				(0.125, 0.0256728)
				(0.0625,0.00991271)
				(0.0312,0.00342262)
			};
			
			\addplot[color=red,mark=o] coordinates {
				(0.25,0.00490543)
				(0.125, 0.000673714)
				(0.0625,9.83025e-05)
				(0.0312,1.42311e-05)
			};

			\addplot[color=blue,mark=o] coordinates {
				(0.25,0.000187929)
				(0.125,1.10885e-05)
				(0.0625,6.69427e-07)
				(0.03125,4.11108e-08)
			};
			\addplot[color=red,mark=triangle] coordinates {
				(0.25,4.2186e-06)
				(0.125, 1.19909e-07)
				(0.0625,3.59448e-09)
				(0.0312,1.18657e-10)
			};
			\logLogSlopeTriangle{0.2}{0.1}{0.79}{1.36}{blue};
			\logLogSlopeTriangle{0.2}{0.1}{0.565}{2.81}{red};
			\logLogSlopeTriangle{0.2}{0.1}{0.32}{4.05}{blue};
			\logLogSlopeTriangle{0.2}{0.1}{0.08}{5.04}{red};
			\legend{$N=1$,$N=2$,$N=3$,$N=4$}
			\end{loglogaxis}
			
			\end{tikzpicture}}
		
		\caption{Convergence of $L^2$ errors for the Scholte wave solution using material coefficients in \cite{ye2016discontinuous}.}
		\label{fig:Scholte2}
	\end{figure}
%
\subsection{Curvilinear meshes}\label{sec:curved}
We now present numerical experiments verifying the stability and accuracy of the DG scheme presented in \note{Section~\ref{sec:eawave}} for curvilinear meshes. We use isoparametric mappings in the following experiments, where the mapping from the reference element to each physical element is a polynomial of degree $N$. We start from a uniform triangular mesh on the square domain $\Omega=[-1,1]^2$ and place high-order Warp and Blend interpolation nodes on each element. The physical locations $(x_i,y_i)$ of these nodes are then perturbed to produce new nodal positions $(\tilde{x}_i,\tilde{y}_i)$, where
$$\tilde{x}_i=x_i+\frac{1}{8}\cos\left(\frac{3\pi}{2}x\right)\sin\left(\pi y\right),\qquad\tilde{y}_i=y_i+\frac{1}{8}\sin\left(\pi x\right)\sin\left(\pi y\right).$$
These new positions $(\tilde{x}_i,\tilde{y}_i)$ now define a coordinate mapping from the reference element to a curved physical element, producing the warped mesh in Figure~\ref{fig:curved}. This mesh warping is constructed such that $x$ and $y$ deformations of each element are of roughly the same magnitude, while leaving the positions of nodes on the boundary unchanged. 
\begin{figure}
	\centering
	\setcounter{subfigure}{0}
	\subfloat[$\tau_u=\tau_p=0$]{\includegraphics[width=0.35\linewidth]{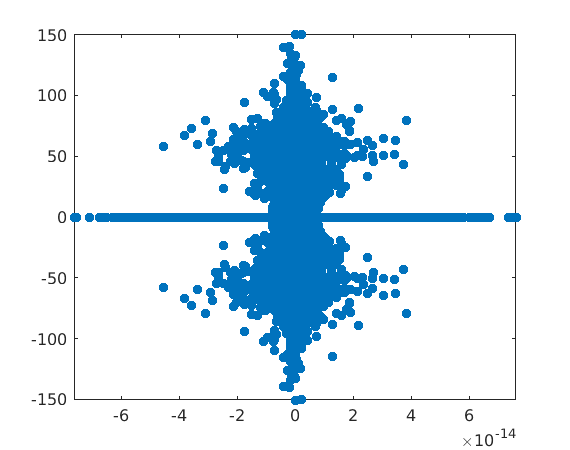}}
	\subfloat[$\tau_u=\tau_p=\frac{1}{2}$]{\includegraphics[width=0.35\linewidth]{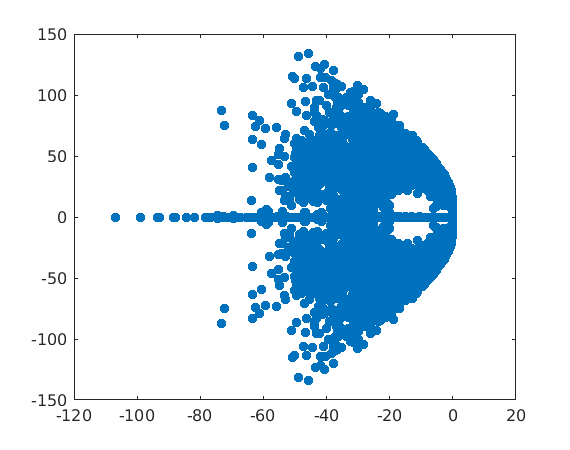}}
	\subfloat[$\tau_u=\tau_p=1$]{\includegraphics[width=0.35\linewidth]{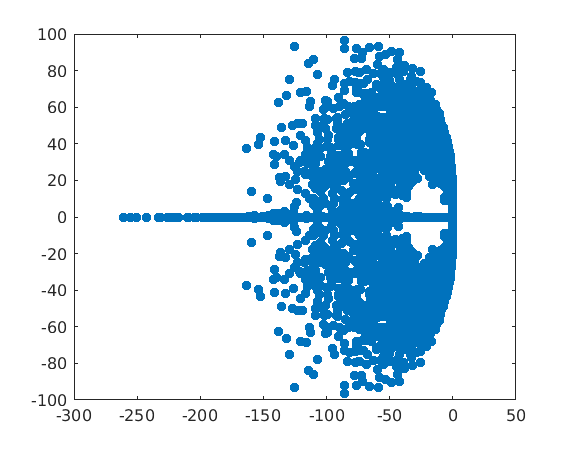}}\\
	\caption{Spectra of the discontinuous Galerkin discretization matrix for central and penalty fluxes on a warped curvilinear mesh of degree $N=3$.}
\label{fig:curvedspectra}
\end{figure}
\begin{figure}
	\setcounter{subfigure}{0}
	\subfloat[Central flux ($\tau_u=\tau_p=0$)]{
		\begin{tikzpicture}
		\begin{loglogaxis}[
		width=0.475\textwidth,
		title style = {font=\small},
		xlabel=Mesh size $h$,
		ylabel=$L^2$ Error,legend pos = south east,
		legend style = {font=\tiny}
		]
		\addplot[color=blue,mark=diamond] coordinates {
			(0.25,0.304203)
			(0.125, 0.167329)
			(0.0625,0.0883194)
			(0.0312,0.0456539)
		};
		\addplot[color=red,mark=o] coordinates {
			(0.25,0.0588248)
			(0.125, 0.0110107)
			(0.0625,0.00199906)
			(0.0312,0.000429199)
		};

		\addplot[color=blue,mark=o] coordinates {
			(0.25,0.0105581)
			(0.125, 0.00151951)
			(0.0625,0.000203999)
			(0.03125,0.0000259349)
		};
		\addplot[color=red,mark=triangle] coordinates {
			(0.25,0.00251472)
			(0.125, 0.0001611)
			(0.0625,0.00000813979)
			(0.0312,0.000000424105)
		};
		\logLogSlopeTriangle{0.2}{0.1}{0.79}{0.95}{blue};
		\logLogSlopeTriangle{0.2}{0.1}{0.505}{2.22}{red};
		\logLogSlopeTriangle{0.2}{0.1}{0.33}{2.98}{blue};
		\logLogSlopeTriangle{0.2}{0.1}{0.08}{4.26}{red};
		\legend{$N=1$,$N=2$,$N=3$,$N=4$}
		\end{loglogaxis}
		
		\end{tikzpicture}}
		\subfloat[Penalty flux ($\tau_u=\tau_p=1$)]{
		\begin{tikzpicture}
		\begin{loglogaxis}[
		width=0.475\textwidth,
		title style = {font=\small},
		xlabel=Mesh size $h$,
		ylabel=$L^2$ Error,legend pos = south east,
		legend style = {font=\tiny}
		]
		\addplot[color=blue,mark=diamond] coordinates {
			(0.25,0.121696)
			(0.125, 0.0391097)
			(0.0625,0.0114519)
			(0.0312,0.00322833)
		};
		
		\addplot[color=red,mark=o] coordinates {
			(0.25,0.0189815)
			(0.125, 0.002894)
			(0.0625,0.000404456)
			(0.0312,0.0000543826)
		};

		\addplot[color=blue,mark=o] coordinates {
			(0.25,0.00328983)
			(0.125, 0.000240836)
			(0.0625,0.0000161137)
			(0.03125,0.00000100707)
		};
		\addplot[color=red,mark=triangle] coordinates {
			(0.25,0.000532739)
			(0.125, 0.0000196195)
			(0.0625,0.000000693116)
			(0.0312,0.0000000226609)
		};
		\logLogSlopeTriangle{0.2}{0.1}{0.715}{1.83}{blue};
		\logLogSlopeTriangle{0.2}{0.1}{0.495}{2.89}{red};
		\logLogSlopeTriangle{0.2}{0.1}{0.285}{4.00}{blue};
		\logLogSlopeTriangle{0.2}{0.1}{0.08}{4.93}{red};
		\legend{$N=1$,$N=2$,$N=3$,$N=4$}
		\end{loglogaxis}
		
		\end{tikzpicture}}
	\caption{Convergence for the Scholte wave problem on curvilinear meshes.}
\label{fig:curvedconvergence}
\end{figure}
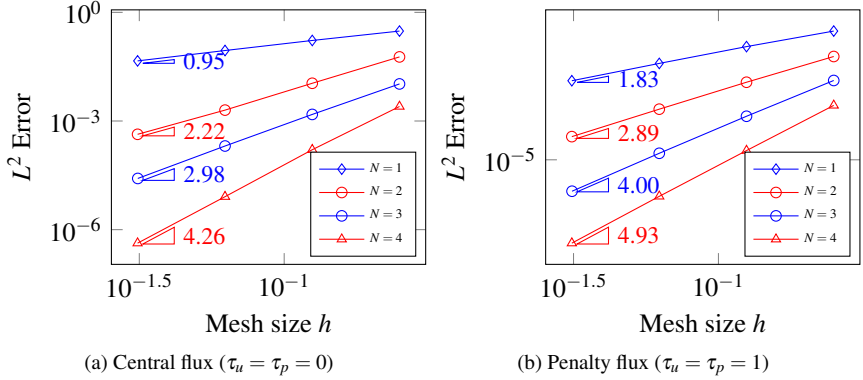

Figure~\ref{fig:curvedspectra} shows computed eigenvalues of the DG discretization matrix for $N=3$ and a warped curvilinear mesh. We use the strong-weak formulation introduced in Section~\ref{sec:curvedDG} and consider both central and penalty fluxes. We observe that for both central and penalty fluxes, the real part of all eigenvalues is non-positive (up to machine precision), verifying that the proposed DG scheme is energy stable. The introduction of the curvilinear warping appears to result in a magnification of the real and imaginary parts of larger magnitude eigenvalues, which also induces a smaller time-step size.  

We compute $L^2$ errors on a sequence of refined curvilinear meshes for $N=1,2,3,4$.  From Figure~\ref{fig:curvedconvergence}, we observe the rates of convergence of $L^2$ errors are consistent with the rates observed for affine meshes in Section~\ref{sec:classical}.

\section{Application examples}\label{sec:app}

In this section, we demonstrate the accuracy and flexibility of the proposed DG method for some application-based problems. In the first example, we simulate wave propagation through heterogeneous and anisotropic media. In the second example, we present an application of the new DG method to an inverse \note{problem} in photoacoustic tomography (PAT).

\subsection{Heterogeneous anisotropic media}
We examine a model wave propagation problem in heterogeneous and anisotropic media. In our experiments, we use two different experimental settings based on \cite{komatitsch2000simulation}. We divide the domain into three parts and set the left half (i.e. $x<0$) to be anisotropic elastic media, the right-bottom part (i.e. $x>0,y<0$) to be isotropic elastic media, and the right-upper part (i.e. $x>0,y>0$) to be acoustic media. We assume that the density $\rho=7100$ is constant over the whole domain.  

In the first experiment, we simulate wave propagation through homogeneous media. The entries of the stiffness matrix $\bm{C}$ in the anisotropic media are taken to be
$${C}_{11} = 0.165,\quad {C}_{12} = 0.05, \quad {C}_{22}=0.062,\quad {C}_{33}=0.0396,\qquad x<0,$$
$${C}_{11} = 0.165,\quad {C}_{12} = 0.0858, \quad {C}_{22}=0.165,\quad {C}_{33}=0.0396,\qquad x>0,\ y<0,$$
and the acoustic wavespeed is set to be
$$c =\sqrt{\frac{C_{11}}{\rho}}, \qquad x>0,\ y>0.$$

In the second experiment, we introduce sub-cell \note{heterogeneities} to the material parameters.  For the isotropic elastic region $x<0,y>0$, we set 
\begin{equation*}
\begin{split}
{C}_{11} &= 0.165\left(1+\frac{1}{4}\sin\left(\frac{x}{0.08}\pi\right)\right),\quad {C}_{12} = 0.05,\\ {C}_{22}&=0.062\left(1+\frac{1}{4}\sin\left(\frac{x}{0.08}\pi\right)\right),\quad {C}_{33}=0.0396\left(1+\frac{1}{4}\sin\left(\frac{x}{0.08}\pi\right)\right),
\end{split}
\end{equation*}
and for the anisotropic elastic region $x<0,y<0$
\begin{equation*}
\begin{split}
{C}_{11} &= 0.165\left(1+\frac{1}{4}\sin\left(\frac{x}{0.08}\pi\right)\right),\quad {C}_{12} = 0.0858,\\ {C}_{22}&=0.165\left(1+\frac{1}{4}\sin\left(\frac{x}{0.08}\pi\right)\right),\quad {C}_{33}=0.0396\left(1+\frac{1}{4}\sin\left(\frac{x}{0.08}\pi\right)\right).
\end{split}
\end{equation*}
In the acoustic domain $x>0,y>0$, we again set
$$c =\sqrt{\frac{C_{11}}{\rho}}.$$

In all experiments, we set the order of approximation $N=5$.  We use a uniform triangular mesh of 32768 elements on domain $[-0.32,0.32]^2$.  Forcing is applied to the $y$ component of the velocity using a Ricker wavelet point source
$$f(\bm{x},t)=\left(1-2\left(\pi f_0\left(t-t_0\right)\right)^2\right)e^{-\left(\pi f_0\left(t-t_0\right)\right)^2}\delta\left(x-x_0\right),$$
where $x_0=-0.02$, $f_0=0.17$, and $t_0=1/f_0$. 
\begin{figure}
	\centering
	\setcounter{subfigure}{0}
	\subfloat[$T=30$]{\includegraphics[width=0.44\linewidth]{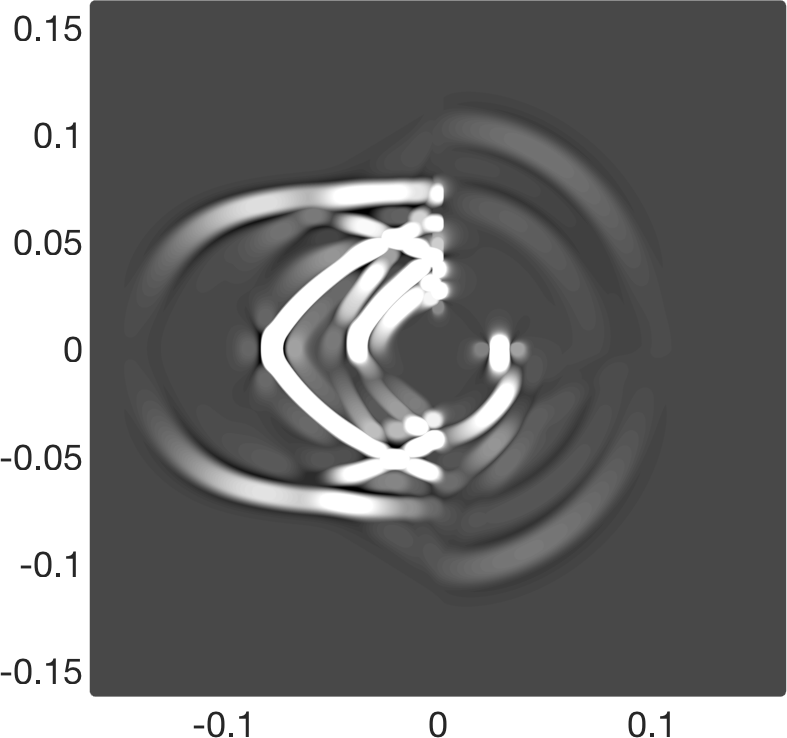}}\hspace{0.2cm}
	\subfloat[$T=60$]{\includegraphics[width=0.5\linewidth]{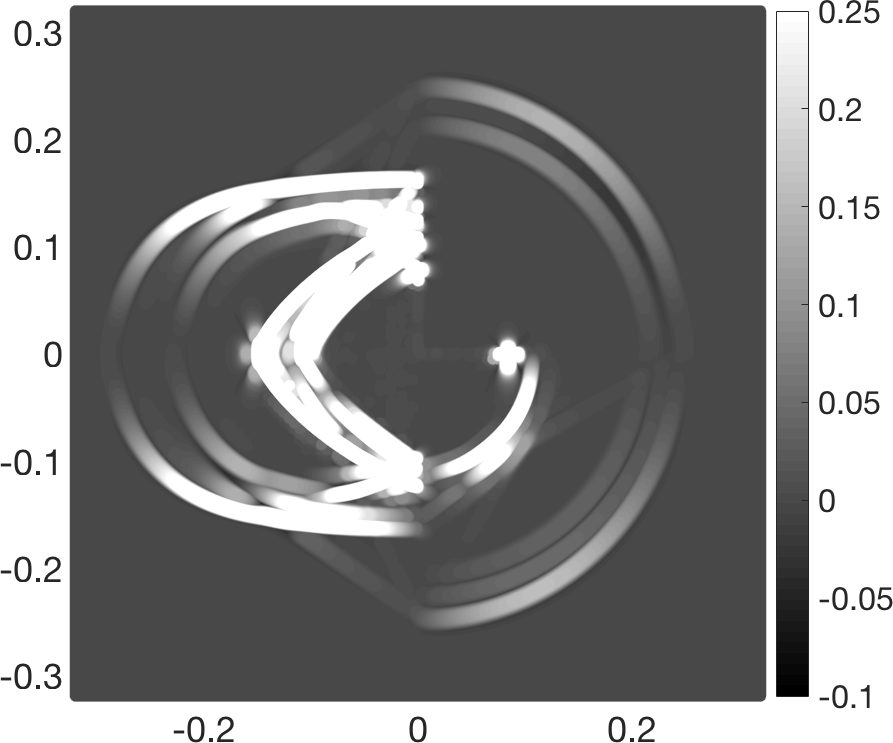}}
	\caption{An example of wave propagation in homogeneous anisotropic-isotropic acoustic-elastic media.}
\label{fig:homo}
\end{figure}
\begin{figure}
	\centering
	\subfloat[$T=30$]{\includegraphics[width=0.44\linewidth]{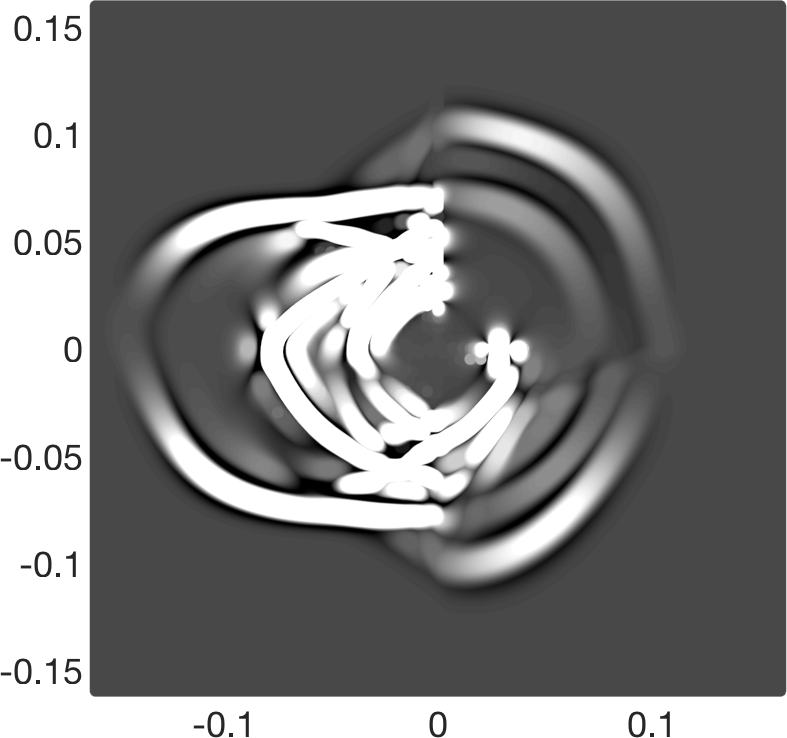}}\hspace{0.2cm}
	\subfloat[$T=60$]{\includegraphics[width=0.5\linewidth]{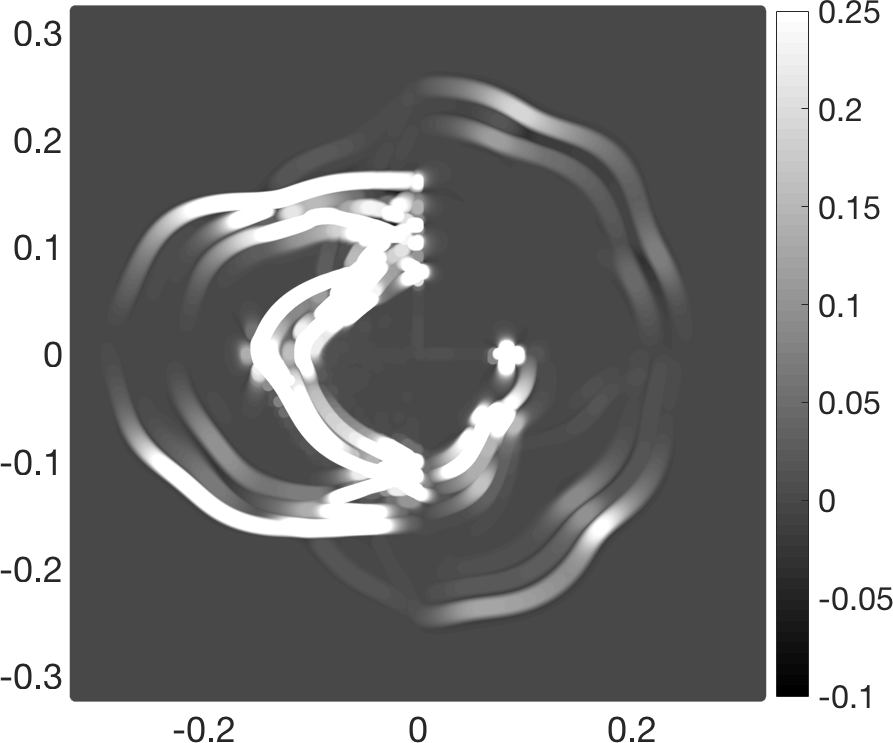}}
	\caption{An example of wave propagation in heterogeneous anisotropic-isotropic acoustic-elastic media.}
\label{fig:inhomo}
\end{figure}

In all implementations, we take the penalty parameters to be $\tau_u=\tau_p=1/2$.  \note{For this value of $\tau$ and for the acoustic wave equation in homogeneous media, the penalty flux coincides with the upwind flux. Moreover, numerical results suggest that the maximum stable time-step size for $\tau = 1/2$ is the same as the maximum stable time-step size for $\tau = 0$ \cite{chan2018weight}, which suggests that this level of dissipation does not require a more restrictive CFL condition}.  Figures~\ref{fig:homo} and \ref{fig:inhomo} show the $y$ component of velocity at times $T=30\mu s$ and $T=60 \mu s$.  In the elastic regions, the results agree with the reference results in \cite{komatitsch2000simulation}.  In the elastic-acoustic regions, we observe the presence of a propagating pressure wave, while the stress wave ends in a Scholte-type wave propagating along the acoustic-elastic interface.  Figure~\ref{fig:inhomo} illustrates the effect of media heterogeneities, which manifest as a spatially-dependent warping of the solution.  

\subsection{Photoacoustic tomography}\label{sec:PAT}

Photoacoustic tomography (PAT) is an imaging modality which takes advantage of high-contrast exhibited by optical absorption and the high resolution available for broadband acoustic waves in soft biological tissues.  PAT relies on the so-called ``photoacoustic effect'':  a short microwave or light pulse is sent through a patient’s body which slightly heats up tissue.  The expansion due to heat generates weak acoustic waves, which are measured away from the patient’s body.  The main step of PAT is the recovery of the initial acoustic profile, which in turn provides information about the rate of absorption and tissue properties at different points in the body.  

Given the initial state of the pressure field $P$, the forward mapping $\mathcal{F}$ propagates the wave field to the  measurements $M$ (Dirichlet data)  on the boundary $(0,T) \times \partial \Omega$. In practice, to produce synthetic measurements, an absorbing boundary condition is employed to allow the waves to radiate outwardly without spurious reflections. The goal of PAT is to invert the forward mapping $\mathcal{F} : P \mapsto M$. Typically, a time-reversal method is utilized to approximately invert this forward mapping. The time-reversal mapping $\mathcal{R}$ consists of running the wave system backwards in time, from vanishing final condition at $\{t = T\} \times \Omega$, driven from the boundary $(0,T) \times \partial \Omega$ by the time-reversed boundary measurements $M$ as Dirichlet data.  The resulting pressure profile at $\{ t=0 \} \times \Omega$ is an approximation of the original profile $P$.

This approach can be inaccurate for short times and heterogeneous media. However, the quality of the reconstruction can be improved by approximating the exact inversion operator using a truncated Neumann series \cite{qian2011efficient}.  Similar reconstruction algorithms have been introduced for several variations of the wave equation \cite{stefanov2009thermoacoustic,stefanov2015multiwave,acosta2015multiwave, homan2012multi,palacios2016reconstruction}. We follow the approach proposed in \cite{acosta2018thermoacoustic}, which is summarized in Algorithm~\ref{alg:pat}. These approaches rely on the following error estimate,
\begin{equation*}
\| \text{Id} - \mathcal{R} \mathcal{F} \|_{L^{2}(\Omega)} \leq \kappa < 1,
\end{equation*}
which is verifiable when the wave speed is non--trapping (see details in \cite{stefanov2009thermoacoustic}). In other words, the time-reversal mapping $\mathcal{R}$ inverts the forward operator $\mathcal{F}$ up to a contraction mapping. Algorithm~\ref{alg:pat} is then the application of a fixed point iteration or truncated Neumann series. The error associated with the $n^{\text{th}}$ iteration satisfies,
\begin{equation*}
\| P - P_{n} \|_{L^{2}(\Omega)} \leq \| P_{0} \|_{L^{2}(\Omega)} \frac{\kappa^{n+1}}{1 - \kappa},
\end{equation*}
where $\kappa < 1$.

\begin{algorithm}
	\caption{Time-reversal algorithm for PAT}\label{euclid}
	\begin{algorithmic}[1]
		\Procedure{Initial time-reversal given boundary measurements}{}
		\State Solve the wave propagation problem backwards in time with boundary conditions driven by boundary measurements and zero final time condition. 
		\State Store the pressure field at time $t=0$ in $P_{0}$.
		\EndProcedure
		\Procedure{Forward and Backward Iteration}{}
		\For{n=1:Max iteration} 		
		\State Apply the forward solver with initial condition $P_{n-1}$ and absorbing boundary conditions.  Store the solution at time $t=T$ in $P_f$.
		\State Apply the backward solver with initial condition $P_f$ and zero Dirichlet boundary condition.  Store the solution at time $t=0$ in $P_b$.
		\State Update $P_{n} = P_{n-1} + P_b$.
	    \EndFor			
		\EndProcedure
	\end{algorithmic}
	\label{alg:pat}
\end{algorithm}

We test our PAT algorithm by reconstructing portions of the Shepp-Logan phantom (SLP) , which is a standard test for image reconstruction algorithms. The SLP is defined as the sum of 10 ellipses inside the computational domain $[-1,1]^2$.  The specific setting of our experiment is presented in Table~\ref{tab:slp}, and we arbitrarily set the penalty parameters to be $\tau_u = \tau_p = \tau_\sigma = \tau_v = 1$. We simply use the even polynomial function in \cite{yu2005differentiable} to construct a smoothed Shepp-Logan phantom for our numerical simulations with smoothing parameters $m=2,n=4$.  

We modify the typical SLP to emulate physical settings found for a human skull.  We consider the domain inside domain of Ellipse a and outside of Ellipse b as skull modeled by elastic media.  The rest of the domain is acoustic. The meshes (see in Figure~\ref{fig:refinedmesh} and~\ref{fig:coarsemesh}) for the SLP is generated by \textbf{MESH2D} \cite{engwirda2009mesh2d}, a MATLAB-based mesh-generator for two-dimensional geometries.  We use two meshes to test our PAT solver and compare results. The fine mesh consists of 7626 nodes and 14994 elements.  The thinnest portion of the elastic domain is resolved using three layers of elements. The coarse mesh consists of 4190 nodes and 8122 elements, and the thinnest portion of the elastic strip is resolved \note{using} only one or two layers of elements.  

We generate synthetic boundary data by running a forward problem and saving boundary measurements up to final time \textcolor{black}{$T=2$}.  We implement two versions of PAT: the first uses forward and backward solvers based on the discussed acoustic-elastic DG formulation, while the second uses a purely acoustic solver for comparison.  The wavespeed for the purely acoustic solver is set to be the pressure wavespeed for the elastic system.  All experiments are run on an Nvidia TITAN GPU, and the solvers are implemented in the Open Concurrent Compute Abstraction framework (OCCA) \cite{medina2014occa} for clarity and portability.

\begin{table}
\centering
 \begin{tabular}{||c c c c c c||} 
 \hline
 Ellipse & Center & Major Axis & Minor Axis& Theta & Value \\ [0.5ex] 
 \hline\hline
 a & $(0,0)$ & 0.69& 0.92 & 0&0\\ 
 b & $(0,-0.0184)$ & 0.6624 & 0.874 &0 &0\\
 c & $(0.22,0)$ & 0.11 & 0.31& $-0.18^{\circ}$&0.02 \\
 d & $(-0.22,0)$ & 0.16 & 0.41& $0.18^{\circ}$&0.02\\
 e & $(0,0.35)$ & 0.21& 0.25&0&0.01 \\ 
 f & $(0,0.1)$ & 0.046 & 0.046 &0&0.01\\ 
 g & $(0,-0.1)$ & 0.046 & 0.046&0&0.01 \\ 
 h & $(-0.08,-0.605)$ & 0.046 & 0.023&0 &0.01\\ 
 i & $(0,-0.605)$ & 0.023 & 0.023 &0&0.01\\ 
 j & $(0.06,-0.605)$ & 0.023 & 0.046&0&0.01 \\ 
 [1ex] 
 \hline
  \end{tabular}
\caption{Setting of Shepp-Logan Phantom.}
\label{tab:slp}
\end{table}

\begin{table}[]
	\centering
	\begin{tabular}{||c c c c c||} 
		\hline
		Iteration & Fine & Fine (acous) & Coarse & Coarse (acous)\\ [0.5ex] 
		\hline\hline
		1 & 0.140530 & 0.147435 &0.140556&0.147103\\ 
		2 & 0.094658 &0.133881 &0.094811&0.133508\\
		3 & 0.075081 &0.130397&0.075347& 0.130010 \\
		4 & 0.065585 &0.129331 &0.065941&0.128939 \\
		5 & 0.060577 &0.128973 &0.060998&0.128577\\ 
		[0.5ex] 
		\hline
	\end{tabular}
	\caption{Relative $L^2$ errors at each iteration.}
	\label{tab:convergence}
\end{table}

\begin{figure}
	\centering
	\setcounter{subfigure}{0}
	\subfloat[Mesh]{\includegraphics[width=0.4\linewidth,height=0.4\linewidth]{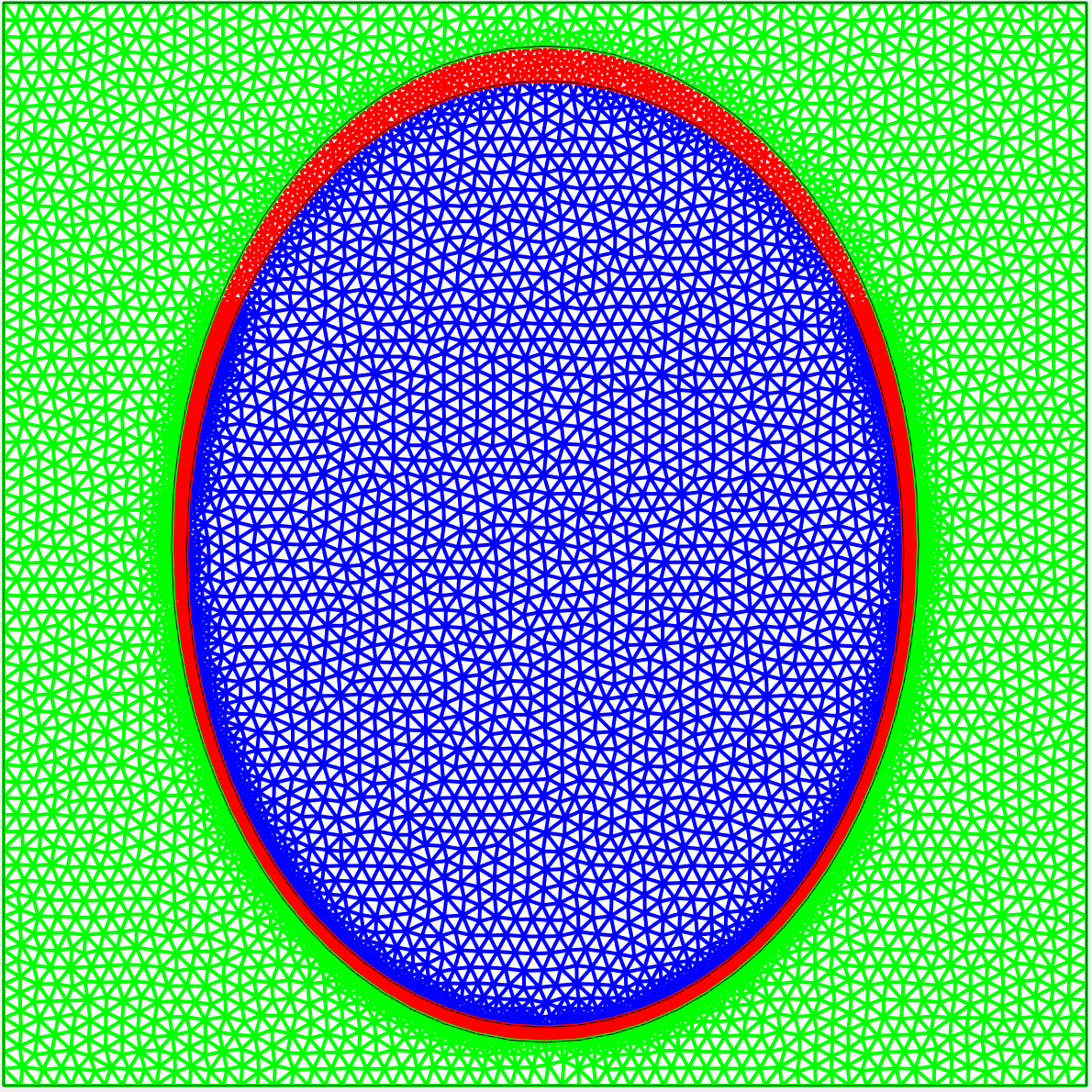}}
	\hskip 3ex
	\subfloat[Local mesh]{\includegraphics[width=0.4\linewidth,height=0.4\linewidth]{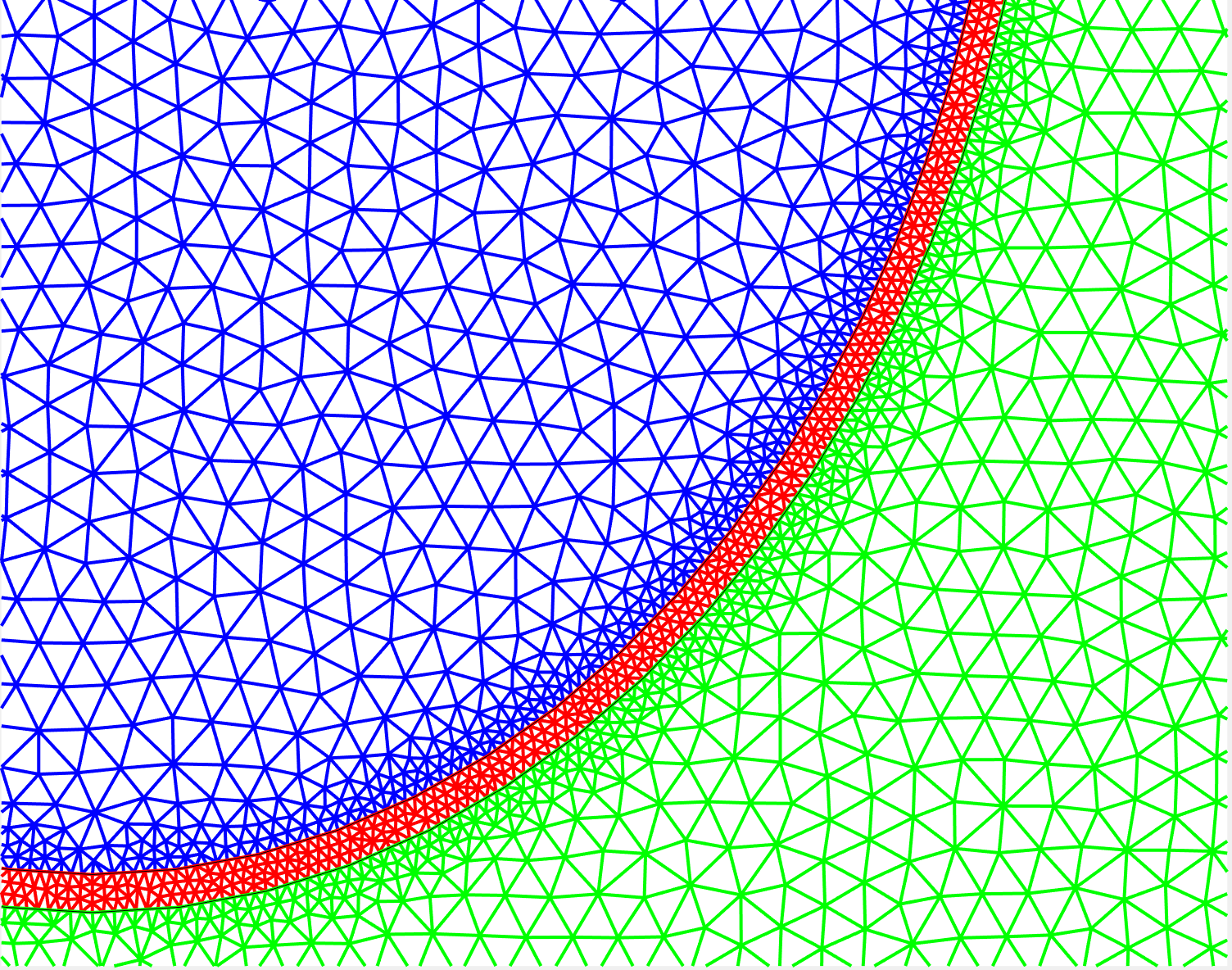}}
\caption{Fine mesh for the Shepp-Logan phantom.}
\label{fig:refinedmesh}
\end{figure}

\begin{figure}
	\centering
	\setcounter{subfigure}{0}
	\subfloat[Exact initial pressure]{\includegraphics[width=0.416\linewidth]{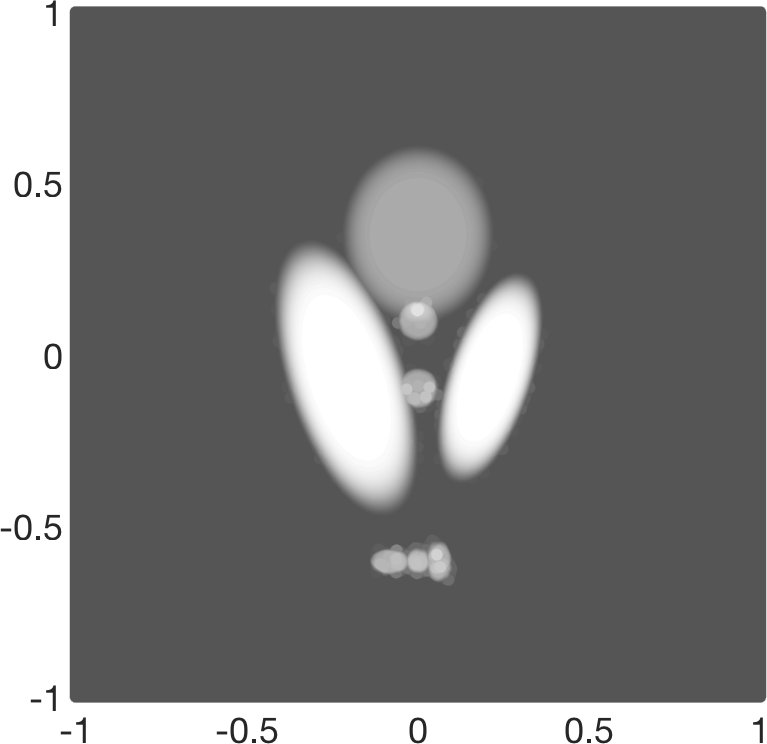}}\hspace{0.2cm}
	\subfloat[Purely acoustic reconstruction]{\includegraphics[width=0.5\linewidth]{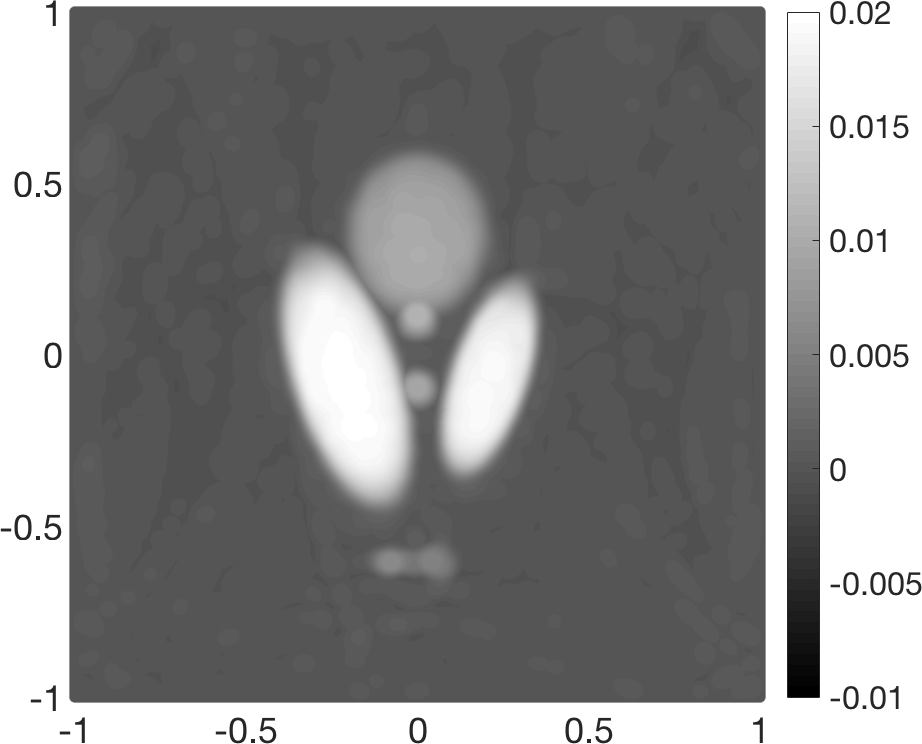}}\\
	\subfloat[Reconstruction after 1 iteration]{\includegraphics[width=0.416\linewidth]{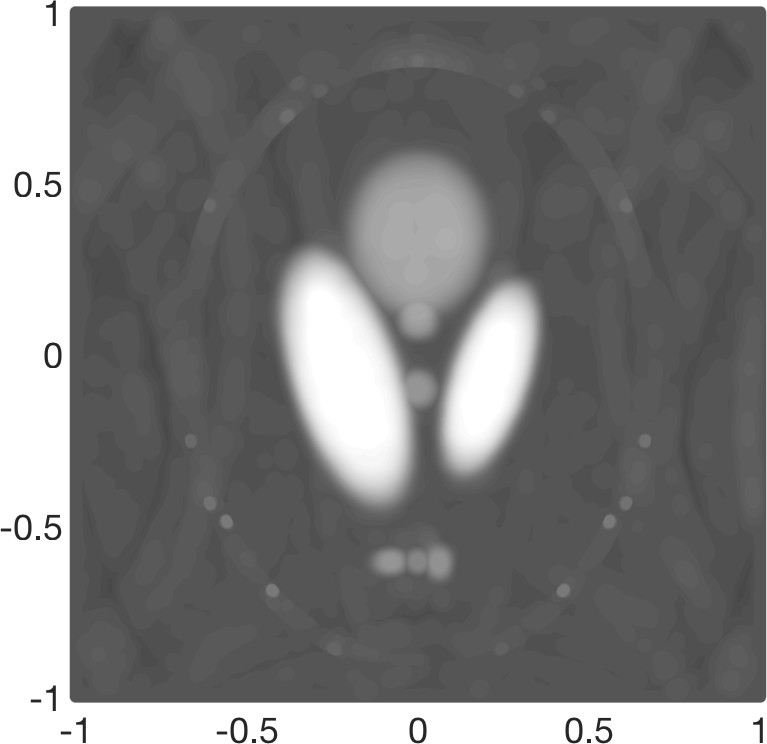}}\hspace{0.2cm}
	\subfloat[Reconstruction after 5 iterations]{\includegraphics[width=0.5\linewidth]{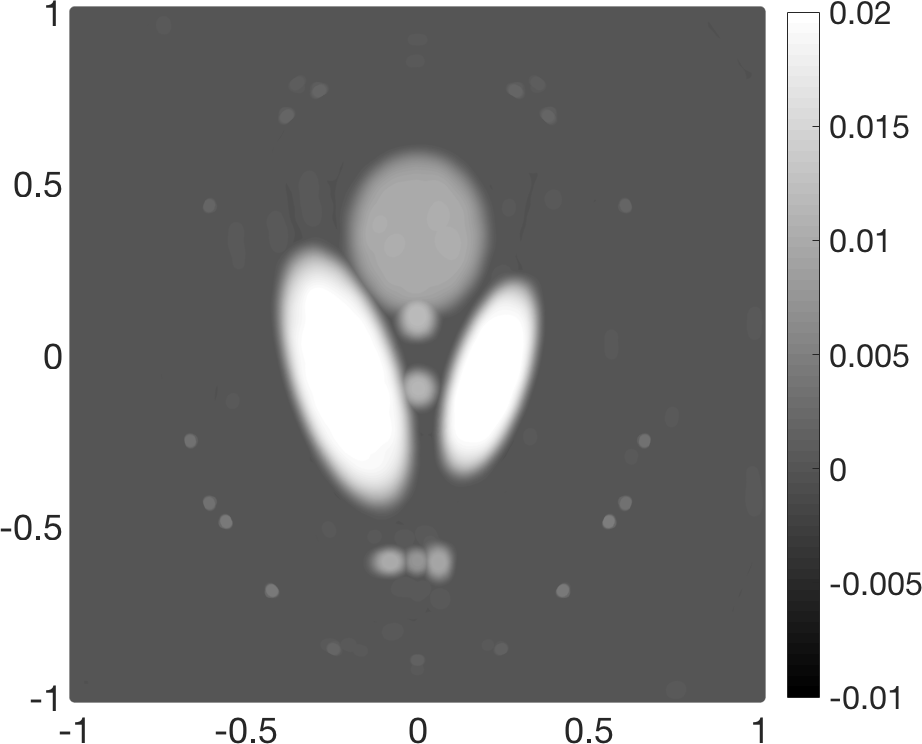}}
\caption{Reconstruction results using fine mesh.}
\label{fig:refinedPAT}
\end{figure}
\begin{figure}
	\centering
	\setcounter{subfigure}{0}
	\subfloat[Error after 1 iteration]{\includegraphics[width=0.416\linewidth]{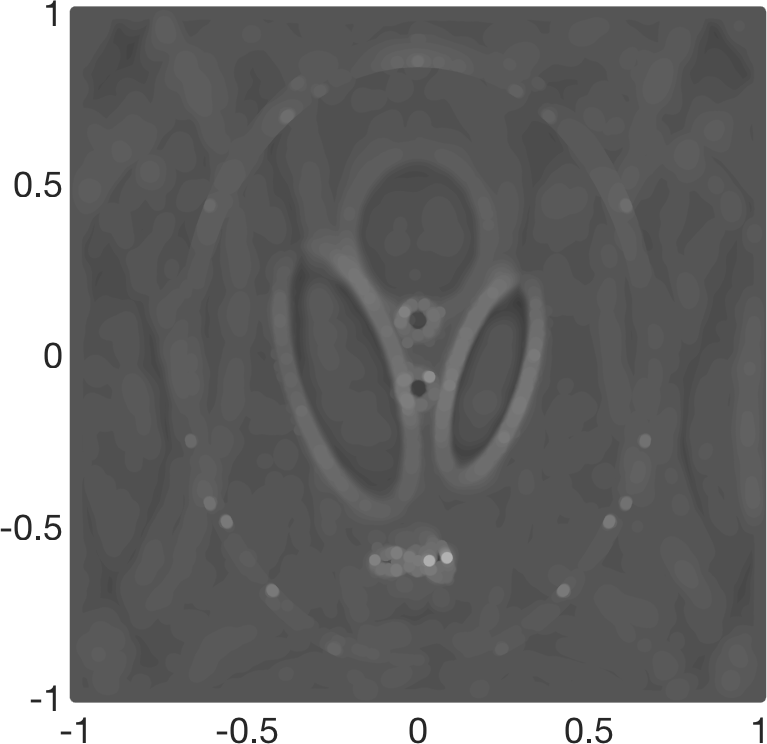}}\hspace{0.2cm}
	\subfloat[Error after 5 iteration]{\includegraphics[width=0.5\linewidth]{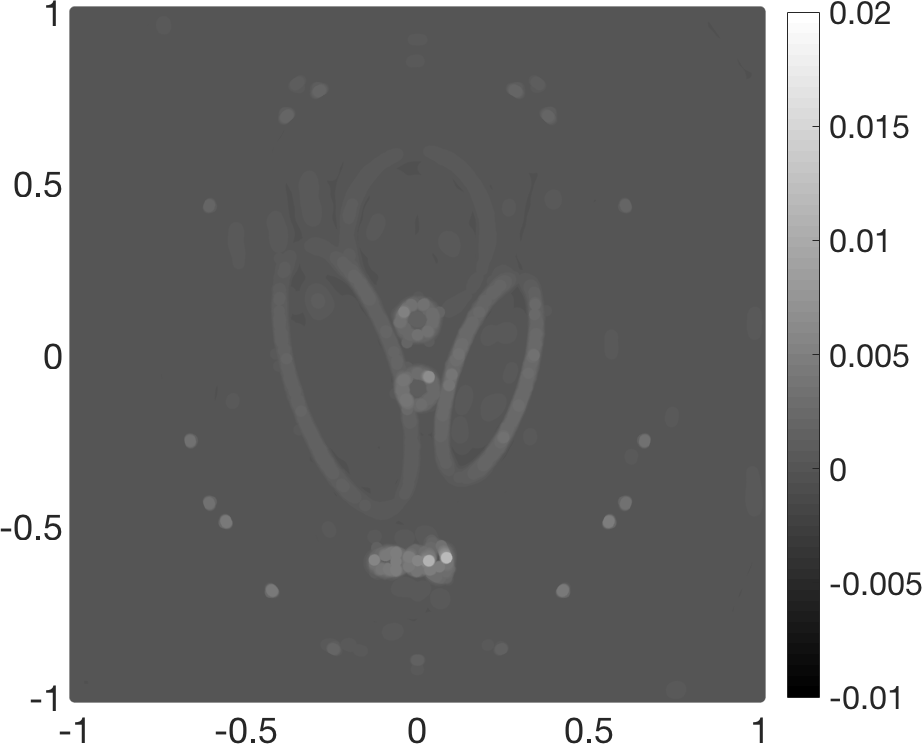}}
\caption{Reconstruction errors using fine mesh.}
\label{fig:refinederror}
\end{figure}

\begin{figure}
	\centering
	\setcounter{subfigure}{0}
	\subfloat[Mesh]{\includegraphics[width=0.4\linewidth,height=0.4\linewidth]{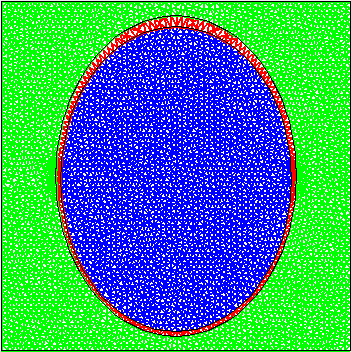}}
	\hskip 4ex
	\subfloat[Local mesh]{\includegraphics[width=0.4\linewidth,height=0.4\linewidth]{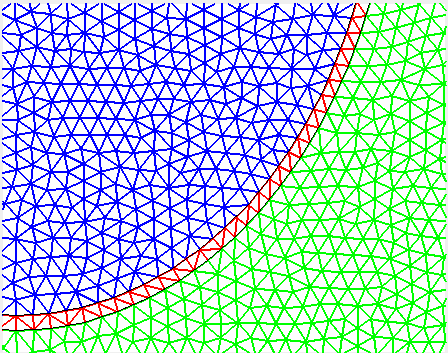}}
	\caption{Coarse mesh for the Shepp-Logan phantom.}
	\label{fig:coarsemesh}
\end{figure}

\begin{figure}
	\centering
	\setcounter{subfigure}{0}
	\subfloat[Exact initial pressure]{\includegraphics[width=0.416\linewidth]{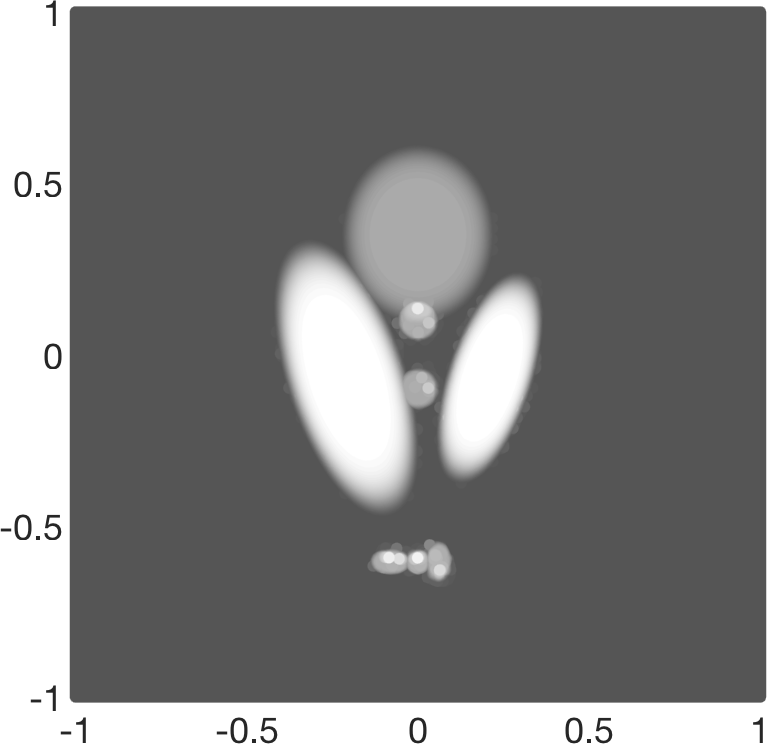}}\hspace{0.2cm}
	\subfloat[Purely acoustic reconstruction]{\includegraphics[width=0.5\linewidth]{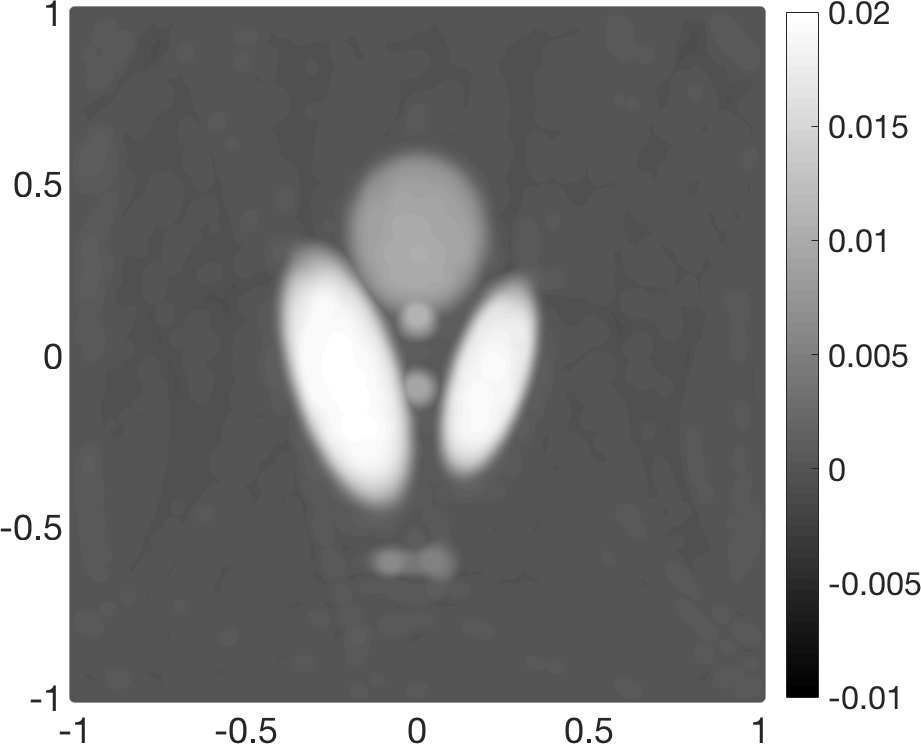}}\\
	\subfloat[Reconstruction after 1 iteration]{\includegraphics[width=0.416\linewidth]{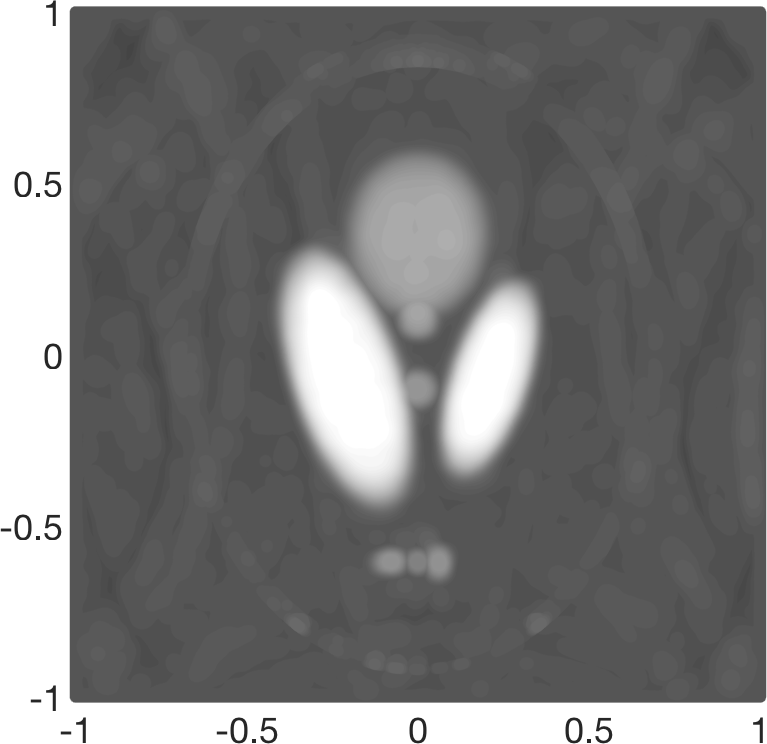}}\hspace{0.2cm}
	\subfloat[Reconstruction after 5 iterations]{\includegraphics[width=0.5\linewidth]{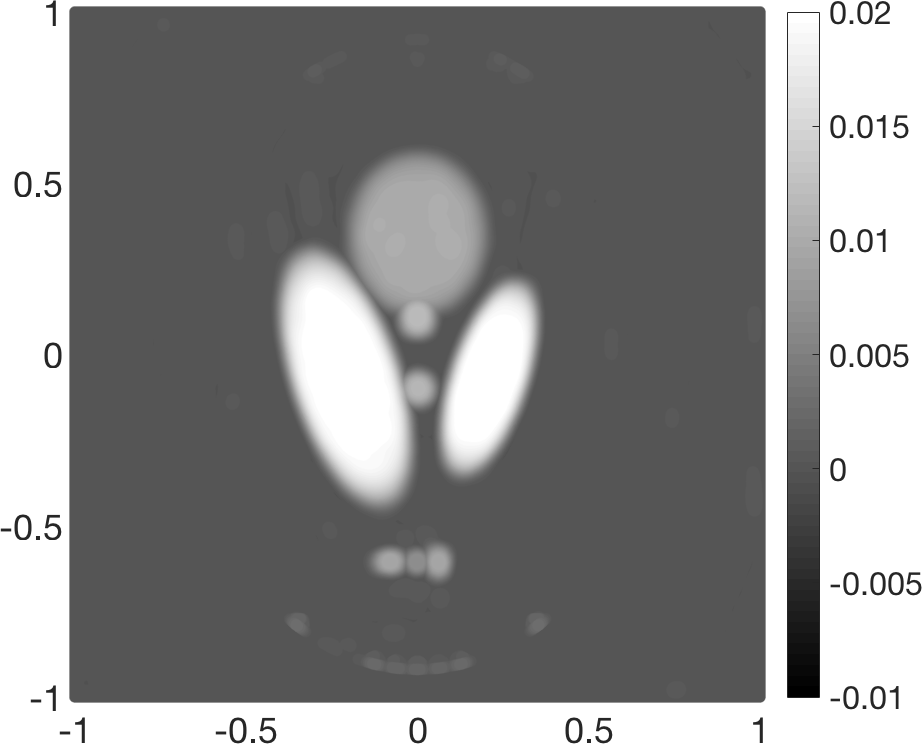}}
	\caption{Reconstruction results using coarse mesh.}
	\label{fig:coarsePAT}
\end{figure}
\begin{figure}
	\centering
	\setcounter{subfigure}{0}
	\subfloat[Error after 1 iteration]{\includegraphics[width=0.416\linewidth]{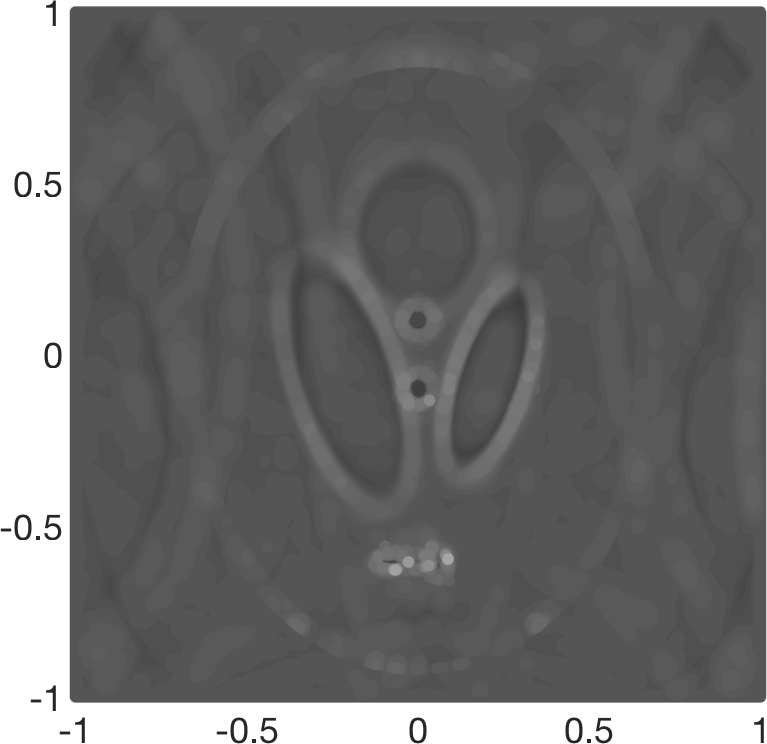}}\hspace{0.2cm}
	\subfloat[Error after 5 iteration]{\includegraphics[width=0.5\linewidth]{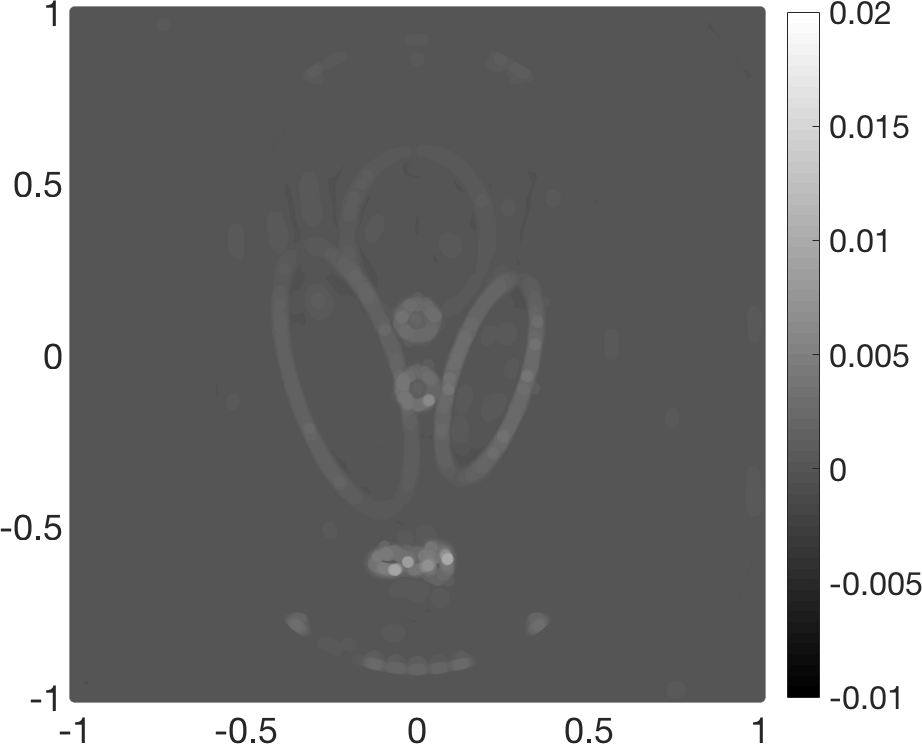}}
	\caption{Reconstruction errors using coarse mesh.}
	\label{fig:coarseerror}
\end{figure}
The relative $L^2$ errors during each iteration are presented in Table~\ref{tab:convergence}. We observe that, independently of the mesh size, the relative errors of the reconstructed initial data are $\approx 0.06$, while the relative errors of the reconstruction from purely acoustic time-reversal are roughly twice as large $\approx 0.12$.  We present reconstructed initial pressures for both meshes in Figure~\ref{fig:refinedPAT} and \ref{fig:coarsePAT}. From these figures, we observe that using a purely acoustic solver results in larger background noise than using a coupled acoustic-elastic solver.  We also observe that the error in the reconstruction is \note{concentrated} near the boundary of eclipses and at the elastic-acoustic interfaces. The former is due to high gradients in the solution, while the latter may be due to the retention of energy within the elastic region.

\section{Conclusion and future work}\label{sec:conclusion}

In this paper, we present a high order discontinuous Galerkin method for wave propagation in coupled elastic-acoustic media.  The method utilizes easily invertible weight-adjusted approximations of weighted mass matrices, as well as an upwind-like penalty numerical flux across the interface between elastic and acoustic media.  The formulation is provably discretely energy stable and consistent on arbitrary heterogeneous media, including anisotropy and sub-cell \note{micro-heterogeneities}.  An extension of the method to curvilinear meshes achieves similar results.  Numerical examples confirm the high order accuracy of this method for analytic solutions to classical interface problems, and results produced by the proposed method are consistent with existing results for isotropic and anisotropic heterogeneous media.  

Future work includes the acceleration of the proposed method using tailored Bernstein-Bezier algorithms \cite{chan2017gpu, kg2018bern}, which can reduce the computational complexity of the implementation from $O(N^{2d})$ to $O(N^{d+1})$ in $d$ dimensions, as well as extensions to wave propagation in acoustic-elastic-poroelastic media \cite{shukla2019weight}.  

\section*{Acknowledgments}
Kaihang Guo and Jesse Chan acknowledge the support of the National Science Foundation under awards DMS-1719818 and DMS-1712639.  The work of Sebastian Acosta was partially supported by NSF grant DMS-1712725.


\bibliographystyle{model1-num-names}
\bibliography{refs}

\end{document}